\newtheorem{thm}{Theorem}[section]
\newtheorem{prop}[thm]{Proposition}
\newtheorem{lem}[thm]{Lemma}
\newtheorem{cor}[thm]{Corollary}
\newtheorem{df}[thm]{Definition}
\newtheorem{exa}[thm]{Example}
\newtheorem{rem}[thm]{Remark}
\newtheorem{Para}[thm]{}
\newcommand{\X}{\mathscr{X}}
\newcommand{\T}{\mathcal{T}}
\newcommand{\N}{\mathcal{N}}
\newcommand{\K}{\mathbb{K}}
\newcommand{\D}{\mathbb{D}}
\def\Im{\mathop{\rm Im}\nolimits}
\def\Ker{\mathop{\rm Ker}\nolimits}
\def\Coker{\mathop{\rm Coker}\nolimits}
\def\mod{\mathop{\rm mod}\nolimits}
\def\Hom{\mathop{\rm Hom}\nolimits}
\def\Ext{\mathop{\rm Ext}\nolimits}
\def\pd{\mathop{\rm pd}\nolimits}
\def\Gpd{\mathop{\rm Gpd}\nolimits}
\def\proj{\mathop{\rm proj}\nolimits}
\def\Gproj{\mathop{\rm Gproj}\nolimits}
\def\Gperf{\mathop{\rm Gperf}\nolimits}
\def\Gpd{\mathop{\rm Gpd}\nolimits}
\def\inf{\mathop{\rm inf}\nolimits}
\def\Im{\mathop{\rm Im}\nolimits}
\begin{document}

\title{\large \bf 2-recollements of singualrity categories and Gorenstein defect categories over triangular matrix algebras
\thanks{{\it 2010 Mathematics Subject Classification}: 18E30, 18E35, 18G20.}
\thanks{{\it Keywords}: recollement; 2-recollement; singularity category; Gorenstein defect category; triangular matrix algebra.}
}
\author{Huanhuan Li$^a$, Dandan Yang$^a$, Yuefei Zheng$^b$ and Jiangsheng Hu$^{c}$\footnote{Corresponding author} \\
\it\footnotesize $^a$School of Mathematics and Statistics, Xidian University, Xi'an 710071, Shaanxi Province, China\\
\it\footnotesize $^b$College of Science, Northwest A$\&$F University, Yangling 712100, Shaanxi Province, China\\
\it\footnotesize $^c$School of Mathematics and Physics, Jiangsu University of Technology, Changzhou 213001, Jiangsu Province, China\\
\it\footnotesize Email addresses: lihh@xidian.edu.cn, ddyang@xidian.edu.cn, yuefeizheng@sina.com and jiangshenghu@jsut.edu.cn
}
\date{}
\baselineskip=14pt
\maketitle
\begin{abstract} Let $T=\left(
                                                                                 \begin{array}{cc}
                                                                                   A & M \\
                                                                                   0 & B \\
                                                                                 \end{array}
                                                                               \right)$ be a triangular matrix algebra with its corner algebras $A$ and $B$ Artinian and $_AM_B$ an $A$-$B$-bimodule. The 2-recollement structures for singularity categories and Gorenstein defect categories over $T$ are studied. Under mild assumptions, we provide necessary and
                                                                               sufficient conditions for the existences of 2-recollements of singularity categories and Gorenstein defect categories
                                                                               over $T$ relative to those of $A$ and $B$. Parts of our results strengthen and unify the corresponding work in \cite{LL,L,ZP}.
\end{abstract}

%

\section{\bf Introduction}
The singularity category was introduced by Buchweitz, known back then as the stable derived category, in his famous unpublished paper \cite{Bu}. As an initial purpose, Buchweitz used this category to study the stable homological algebra and Tate cohomology for certain rings. In the setting of algebraic geometry, this category was reconsidered by Orlov \cite{O} and turned out to have a closed relation with the ``Homological Mirror Symmetry Conjecture'' due to Kontsevich. Recall that, for a given algebra $R$, the singularity category $\D_{sg}(R)$ of $R$ is defined to be the Verdier quotient $\D_{sg}(R):=\D^b(\mod R)/\K^b(\proj R)$, where $\D^b(\mod R)$ is
the bounded derived category of finitely generated $R$-modules and $\K^b(\proj R)$ is the bounded homotopy category of finitely generated projective $R$-modules (i.e., the subcategory of perfect complexes). It measures the ``regularity'' of $R$ in sense that $\D_{sg}(R)=0$ if and only if $R$ is of finite global dimension.
By the fundamental result in \cite{Bu}, the singularity category contains $\underline{\Gproj R}$ (the stable category of finitely generated Gorenstein projective $R$-modules) as a triangulated subcategory.
This means there exists a fully faithful triangle functor $F:\underline{\Gproj R}\to\D_{sg}(R)$; besides, $F$ is a triangle-equivalence provided that $R$ is Gorenstein \cite{Bu,H2}. Motivated by this, Bergh, J{\o}rgensen and Oppermann \cite{BJO} introduced the Verdier quotient $\D_{def}(R):=\D_{sg}(R)/\Im F$, and they called it the Gorenstein defect category of $R$.
This category measures how far the algebra $R$ is from being Gorenstein. More precisely, $R$ is Gorenstein if and only if $\D_{def}(R)$ is trivial. Recently, singularity categories and related topics have been studied by many authors, see for example \cite{C1,KZ,LZHZ,LHZ,LL,L,PSS,ZP,ZH}.  

Recollements of triangulated categories and abelian categories arise constantly in algebraic geometry and representation theory \cite{AKLY,BBD,CX1,CX2,CL,CPS2,Hap,M2,P,PSS}. Roughly speaking, a recollement is a short exact sequence of triangulated or abelian categories where the functors involving admit both left and right adjoints. Recollements were first introduced in the setting of triangulated categories by Beilinson, Bernstein
and Deligne \cite{BBD} and then generalized to the level of abelian categories (see e.g. \cite{Hap,P,PSS}). This technique provides a categorical reduction for a bigger triangulated or abelian category to decompose into two smaller ones. Consequently, one might obtain certain algebraic properties of the middle term from the outer two smaller ones. There has been lots of people who consider when the recollement admits some extra adjoint functors.
For example, if there exists a (right) Serre functor in the middle term, then the recollement can be extended $1$ step downwards and  $1$ step upwards \cite{J}; in this case, the diagram involving is called a symmetric recollement \cite{ZP}. If the recollement can be extended $n$ steps downwards, then the diagram involving is called an $n$-recollement \cite{QH}. While the diagram involving is called a ladder \cite{AKLY,BGS}, if the recollement could be extended upwards and downwards. Generous evidences indicate that a recollement behaves better when it admits some extra adjoint functors, see \cite{AKLY,BGS,QH,ZZZZ} and references therein for instance.

Let $T=\left(
                                                                                 \begin{array}{cc}
                                                                                   A & M \\
                                                                                   0 & B \\
                                                                                 \end{array}
                                                                               \right)$ be a triangular matrix algebra with its corner algebras $A$ and $B$ Artinian and $_AM_B$ an $A$-$B$-bimodule. The study of singularity theory
over $T$ by recollements has been considered by many people. For instance, Zhang characterized in \cite{ZP} the class of Gorenstein projective $T$-modules. As an application, he showed that if $T$ is a Gorenstein algebra and $_AM$ is projective, then there exists a recollement of $\underline{\Gproj T}$ relative to $\underline{\Gproj A}$ and $\underline{\Gproj B}$. Later on, Liu-Lu \cite{LL} and Lu \cite{L} generalized this to consider the singularity categories and Gorenstein defect categories, respectively. More precisely, they provided sufficient conditions for the existence of a recollement $\D_{sg}(T)$ (resp. $\D_{def}(T)$) relative to $\D_{sg}(A)$ (resp. $\D_{def}(A)$) and $\D_{sg}(B)$ (resp. $\D_{def}(B)$). However, the results mentioned about provided only sufficient conditions for the existences of certain recollements. So we wonder whether or not we can get necessary and sufficient conditions for the existences of such recollements. Besides, the recollement structures over the triangular matrix algebra might be enriched in some suitable settings. For example,
it was shown in \cite{ZZZZ} that if $A$, $B$ and $T$ are finite-dimensional Gorenstein algebras, then there exists a unbounded ladder of period 1 for the stable categories of Gorenstein projective modules (and hence for the singularity categories). Meanwhile, we note that the recollements under consideration in \cite{ZP,LL,L} are initially from the following 2-recollement of module categories:
$$\xymatrix@=3cm{\mod A\ar@/_3pc/[r]_{\Hom_A(e_AT,-)}\ar@/^1pc/[r]^{i_{e_A}}& \mod T\ar@/_3pc/[r]_{\Hom_T(B,-)}\ar@/^1pc/[l]^{S_{e_A}}\ar@/_3pc/[l]_{A\otimes_T-}\ar@/^1pc/[r]^{S_{e_B}} & \mod B\ar@/^1pc/[l]^{i_{e_B}}\ar@/_3pc/[l]_{Te_B\otimes_B-},}\eqno{(3.1)}$$
see Lemma \ref{lem:3.1} for the detailed expressions of these functors. Therefore, the 2-recollements for the singularity categories, Gorenstein defect categories and stable categories of Gorenstein projective modules over $T$ are expected. In this present paper, we aim to solve these questions.  More precisely, we get the following main results.

\begin{thm}\label{thm:1.1} Let $T=\left(
                                                                                 \begin{array}{cc}
                                                                                   A & M \\
                                                                                   0 & B \\
                                                                                 \end{array}
                                                                               \right)$ be a triangular matrix algebra with $_AM_B$ an $A$-$B$-bimodule. Assume that $\pd_A M<\infty$, $\pd M_B<\infty$ and $M\in{^\bot A}$. Then we have the following 2-recollement of singularity categories:
$$\xymatrix@=3cm{\D_{sg}(A)\ar@/_3pc/[r]_{\overline{\mathbb{R}\Hom_A(e_AT,-)}}\ar@/^1pc/[r]^{\overline{\D^b(i_{e_A})}}& \D_{sg}( T)\ar@/_3pc/[r]_{\overline{\mathbb{R}\Hom_T(B,-)}}\ar@/^1pc/[l]^{\overline{\D^b(S_{e_A})}}\ar@/_3pc/[l]_{\overline{A\otimes^\mathbb{L}_T-}}\ar@/^1pc/[r]^{\overline{\D^b(S_{e_B})}} & \D_{sg}( B)\ar@/^1pc/[l]^{\overline{\D^b(i_{e_B})}}\ar@/_3pc/[l]_{\overline{Te_B\otimes^\mathbb{L}_B-}}}\eqno{(1.1)}$$
if and only if $\pd_B\Hom_A(M,A)<\infty$, where all these functors are initially from (3.1) (see Propositions \ref{prop:3.3} and \ref{prop:3.5} for the detailed descriptions).
\end{thm}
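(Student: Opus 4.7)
My strategy would be to descend the module-level 2-recollement (3.1) stepwise, first to bounded derived categories and then to singularity categories. A triangle functor $F\colon \D^b(\mod R)\to\D^b(\mod S)$ induces $\overline{F}\colon \D_{sg}(R)\to\D_{sg}(S)$ exactly when $F(\K^b(\proj R))\subseteq \K^b(\proj S)$; adjunctions pass through this Verdier quotient. So for each of the eight functors in (3.1), I need the derived extension to be defined on $\D^b$ and to preserve perfect complexes. Propositions~\ref{prop:3.3} and \ref{prop:3.5} should be read as supplying the explicit derived and descended formulas, so that Theorem~\ref{thm:1.1} reduces to pinpointing the one perfectness check that is actually equivalent to $\pd_B\Hom_A(M,A)<\infty$.

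For the ``if'' direction, the standing hypotheses $\pd_A M<\infty$ and $\pd M_B<\infty$ ensure every derived functor in sight is well-defined on $\D^b$: $e_AT\cong A\oplus M$ has finite projective dimension as a left $A$-module; the short exact sequence $0\to M\to e_AT\to A\to 0$ of right $T$-modules gives $\pd A_T<\infty$; and the dual sequence $0\to i_{e_A}(M)\to Te_B\to B\to 0$ of left $T$-modules gives $\pd_T B<\infty$. Most of the perfectness checks are routine extensions of \cite{LL,L,ZP}: the exact functors $\D^b(i_{e_A}), \D^b(i_{e_B}), \D^b(S_{e_A}), \D^b(S_{e_B})$ send projective generators to perfect complexes (using $\pd_A M<\infty$ for $S_{e_A}(Te_B)=M$ and the resolution above for $i_{e_B}(B)$). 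The two extra 2-recollement functors descend with no new hypothesis: $A\otimes^{\mathbb L}_T Te_A=A$ and $A\otimes^{\mathbb L}_T Te_B=0$ are perfect over $A$, and $Te_B\otimes^{\mathbb L}_B B=Te_B$ is projective over $T$.

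The biconditional rests on $\overline{\mathbb{R}\Hom_T(B,-)}$ and its counterpart $\overline{\mathbb{R}\Hom_A(e_AT,-)}$. Applying $\mathbb{R}\Hom_T(B,-)$ to the projective generator $Te_A$ via the projective resolution of $B$ obtained by splicing a resolution of $M$ over $A$ into the sequence $0\to i_{e_A}(M)\to Te_B\to B\to 0$, and collapsing $\mathbb{R}\Hom_A(M,A)$ to $\Hom_A(M,A)$ using $M\in{^\bot A}$, I get $\mathbb{R}\Hom_T(B,Te_A)\simeq\Hom_A(M,A)[-1]$ in $\D^b(\mod B)$. This is perfect over $B$ iff $\pd_B\Hom_A(M,A)<\infty$. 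Combining with the earlier descents gives the ``if'' direction; conversely, the existence of the 2-recollement of singularity categories forces the descent of $\overline{\mathbb{R}\Hom_T(B,-)}$, which by the same computation yields $\pd_B\Hom_A(M,A)<\infty$. The main obstacle will be this final computation: one must construct the correct projective resolution of $B$ as a left $T$-module, carry out the $\Hom$ dualization with the right $T$-module structure tracked correctly, and invoke $M\in{^\bot A}$ at precisely the right step to reduce the question to a purely $B$-module statement about $\Hom_A(M,A)$.
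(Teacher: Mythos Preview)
Your overall strategy---lift (3.1) to $\D^b$ via Lemma~\ref{lem:3.2}(3), then descend to singularity categories by checking that each functor preserves perfect complexes---matches the paper's. The paper organizes this as Proposition~\ref{prop:3.3} (top three rows) plus Proposition~\ref{prop:3.5} (bottom three rows), and your six ``routine'' checks together with your converse argument are essentially what appears there.

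The one substantive difference is your choice of pivot. The paper proves Proposition~\ref{prop:3.5} by analyzing $\mathbb{R}\Hom_A(e_AT,-)$: since $M\in{^\bot A}$, one has $\mathbb{R}\Hom_A(e_AT,A)\cong\left(\begin{smallmatrix}A\\ \Hom_A(M,A)\end{smallmatrix}\right)_{\widetilde{id}}$ directly, and the short exact sequence (ex2) together with \cite[Lemma 2.3]{LZHZ} reduces perfectness over $T$ to $\pd_B\Hom_A(M,A)<\infty$. You instead compute $\mathbb{R}\Hom_T(B,Te_A)\simeq\Hom_A(M,A)[-1]$ via a projective resolution of $_TB$. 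Your computation is correct and arrives at the same condition, but it is more work: you must build the spliced resolution and track the $B$-module structure through it, whereas the paper's route needs no resolution at all.

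There is, however, a small gap in your ``if'' direction. You correctly identify that the biconditional rests on \emph{both} $\mathbb{R}\Hom_T(B,-)$ and $\mathbb{R}\Hom_A(e_AT,-)$, but you only carry out the computation for the former. In the framework of Lemma~\ref{lem:2.7}(3), the four functors that must be checked for recollement (3.3) are $\D^b(S_{e_B})$, $\D^b(i_{e_B})$, $\D^b(S_{e_A})$, and $\mathbb{R}\Hom_A(e_AT,-)$; the functor $\mathbb{R}\Hom_T(B,-)$ is $i^!$, not among them. You can close this either by the parallel (and easier) computation of $\mathbb{R}\Hom_A(e_AT,A)$ as the paper does, or by the general observation that once five of the six functors in a recollement restrict to thick subcategories, the triangle $i_*i^!\to\id\to j_*j^*$ forces the sixth to restrict as well. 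Either way this step should be made explicit.
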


Recall from \cite{ZP} that $_AM_B$ is {\it compatible} if $M\otimes_B-$ carries every acyclic complex of projective $B$-modules to acyclic $A$-complex and $M\in(\Gproj A)^\perp$. We call an $A$-$B$-bimodule $_AM_B$ {\it left Gorenstein singular} if $\Gpd_B\Hom_A(M,F)<\infty$ for any $F\in\Gproj A$; while we call $_AM_B$ {\it right Gorenstein singular} if $\Gpd_AM\otimes_BG<\infty$ for any $G\in\Gproj B$. $_AM_B$ is said to be {\it Gorenstein singular} if it is both left and right Gorenstein singular. We have the following equivalent characterizations for the existence of a 2-recollement of Gorenstein defect categories.

\begin{thm}\label{thm:1.2} Let $T=\left(
                                                                                 \begin{array}{cc}
                                                                                   A & M \\
                                                                                   0 & B \\
                                                                                 \end{array}
                                                                               \right)$ be a triangular matrix algebra with $_AM_B$ compatible. Assume that $\pd_A M<\infty$, $M\in{^\bot\Gproj A}$ and $\pd M_B<\infty$. Then we have the following 2-recollement of Gorenstein defect categories:
$$\xymatrix@=3cm{\D_{def}(A)\ar@/_3pc/[r]_{\widetilde{\mathbb{R}\Hom_A(e_AT,-)}}\ar@/^1pc/[r]^{\widetilde{\D^b(i_{e_A})}}& \D_{def}( T)\ar@/_3pc/[r]_{\widetilde{\mathbb{R}\Hom_T(B,-)}}\ar@/^1pc/[l]^{\widetilde{\D^b(S_{e_A})}}\ar@/_3pc/[l]_{\widetilde{A\otimes^\mathbb{L}_T-}}\ar@/^1pc/[r]^{\widetilde{\D^b(S_{e_B})}} & \D_{def}( B)\ar@/^1pc/[l]^{\widetilde{\D^b(i_{e_B})}}\ar@/_3pc/[l]_{\widetilde{Te_B\otimes^\mathbb{L}_B-}}}\eqno{(1.2)}$$
if and only if $M$ is Gorenstein singular, where all these functors are initially from (3.1) (see Propositions \ref{prop:4.4} and \ref{prop:4.7} for the detailed descriptions).
\end{thm}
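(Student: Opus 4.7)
The plan is to obtain (1.2) by combining the 2-recollement of singularity categories from Theorem \ref{thm:1.1} with a parallel 2-recollement at the level of $\underline{\Gproj}$, and then passing to the Verdier quotient $\D_{def}(-)=\D_{sg}(-)/\Im F$. Note that the hypotheses of Theorem \ref{thm:1.2} subsume the object-level hypotheses of Theorem \ref{thm:1.1}, since $M\in{}^{\bot}\Gproj A$ forces $M\in{}^{\bot}A$; the remaining ingredient $\pd_B\Hom_A(M,A)<\infty$ demanded by Theorem \ref{thm:1.1} will drop out as a special case of left Gorenstein singularity. Zhang's classification of $\Gproj T$ for compatible $_AM_B$ from \cite{ZP}, together with Propositions \ref{prop:4.4} and \ref{prop:4.7}, will be read as the Gorenstein-projective analogue of Propositions \ref{prop:3.3} and \ref{prop:3.5}, furnishing the six derived functors that relate $\underline{\Gproj A}$, $\underline{\Gproj T}$ and $\underline{\Gproj B}$ and which must be modded out.

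For the sufficiency direction, assume $_AM_B$ is Gorenstein singular. Each of the six functors in (3.1) has an explicit description on the $A$- and $B$-components of a $T$-module. For $G\in\Gproj B$ the complex $Te_B\otimes_B^{\mathbb{L}}G$ has $A$-component $M\otimes_BG$ and $B$-component $G$; right Gorenstein singularity gives $\Gpd_A(M\otimes_BG)<\infty$, so by the compatibility description of $\Gproj T$ the whole complex has finite Gorenstein projective dimension over $T$, hence represents $0$ in $\D_{def}(T)$. Dually, for $F\in\Gproj A$ the complex $\mathbb{R}\Hom_A(e_AT,F)$ has $B$-component $\Hom_A(M,F)$ of finite Gorenstein projective dimension by left Gorenstein singularity. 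The remaining four functors are controlled similarly using $\pd_AM<\infty$, $M\in{}^{\bot}\Gproj A$ and $\pd M_B<\infty$. Hence all six derived functors preserve the thick subcategory $\Im F$ on both sides and descend to well-defined triangle functors between the Gorenstein defect categories; the adjunctions and the two recollement triangles on $\D_{def}$ are then inherited from their analogues on $\D_{sg}$ via the universal property of the Verdier quotient.

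For the necessity direction, assume (1.2) is a 2-recollement. Given $G\in\Gproj B$, its image in $\D_{def}(B)$ is zero, so $\widetilde{Te_B\otimes_B^{\mathbb{L}}-}$ sends $G$ to $0$ in $\D_{def}(T)$. The explicit module-theoretic representative $(M\otimes_BG,G)$ combined with the description of $\Gproj T$ under compatibility forces $\Gpd_A(M\otimes_BG)<\infty$, i.e.\ right Gorenstein singularity of $_AM_B$. A dual argument, applying $\widetilde{\mathbb{R}\Hom_A(e_AT,-)}$ to $F\in\Gproj A$ and reading the $B$-component, yields $\Gpd_B\Hom_A(M,F)<\infty$, i.e.\ left Gorenstein singularity. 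Specialising the latter to $F=A$ recovers $\pd_B\Hom_A(M,A)<\infty$, which retroactively confirms that Theorem \ref{thm:1.1} is applicable and that the singularity-level 2-recollement (1.1) really does sit above (1.2).

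The principal obstacle is showing that each of the six derived functors preserves the thick subcategory $\Im F$ on its source and target, so that a well-defined triangle functor on the quotient $\D_{def}$ actually exists; this is the content of Propositions \ref{prop:4.4} and \ref{prop:4.7}, and it reduces to computing the two components of the image of a stable Gorenstein projective under each functor. The Gorenstein singular condition on $_AM_B$ is designed exactly to make these computations produce objects of finite Gorenstein projective dimension. Once this descent is in hand, the six adjunction and orthogonality relations constituting a 2-recollement, as well as the two extra adjoint functors beyond an ordinary recollement, propagate formally from (1.1) without additional work.
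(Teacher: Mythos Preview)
There is a genuine gap in your route through Theorem~\ref{thm:1.1}. To invoke the 2-recollement (1.1) of singularity categories you need $\pd_B\Hom_A(M,A)<\infty$, but Gorenstein singularity of $_AM_B$ only yields $\Gpd_B\Hom_A(M,A)<\infty$ (take $F=A$ in the definition of left Gorenstein singular). These are different conditions in general: over a non-Gorenstein $B$ there exist modules of finite Gorenstein projective dimension and infinite projective dimension. Your claim that ``specialising the latter to $F=A$ recovers $\pd_B\Hom_A(M,A)<\infty$'' conflates the two notions. Consequently, in the sufficiency direction you cannot assume the ambient $\D_{sg}$-level 2-recollement exists, and hence you cannot simply ``descend'' from it by quotienting out $\Im F$; the same confusion undermines the retroactive confirmation you offer in the necessity direction.

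The paper circumvents this by not passing through $\D_{sg}$ at all. Propositions~\ref{prop:4.4} and~\ref{prop:4.7} each start from the bounded-derived-category recollements (3.2) and (3.3), which exist already under the hypotheses $\pd M_B<\infty$ and $\pd_AM<\infty$ (Lemma~\ref{lem:3.2}), and then apply Lemma~\ref{lem:2.7} with the thick subcategories $\Gperf(-)\subseteq\D^b(\mod-)$ together with the identification $\D_{def}(-)\simeq\D^b(\mod-)/\Gperf(-)$ of Lemma~\ref{lem:2.5}. The Gorenstein-singular condition is exactly what makes the relevant functors preserve $\Gperf$; finiteness of $\pd_B\Hom_A(M,A)$ is never required. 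Your componentwise module computations are essentially the right ones, but they should be interpreted as verifying preservation of $\Gperf$ inside $\D^b$, not preservation of $\Im F$ inside a $\D_{sg}$-recollement that may not exist.
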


In the procedure of proving Theorems \ref{thm:1.1} and \ref{thm:1.2}, we obtain equivalent characterizations for the existences of recollements of singularity categories and Gorenstein defect categories over $T$, which generalize
the corresponding results in \cite{LL,L} (see Propositions \ref{prop:3.3} and \ref{prop:4.4}). As a consequence, the recollements (see Corollaries \ref{cor:4.5} and \ref{cor:4.8}) and 2-recollements (see Corollary \ref{cor:4.9}) of stable categories of Gorenstein projective modules over $T$ are obtained accordingly, where Corollary \ref{cor:4.5} generalizes Zhang's result to a more general case (compare \cite[Theorem 3.5]{ZP}).

The contents of this paper are outlined as follows.
In Section \ref{Pre}, we fix some notations and recall some basic definitions and facts that are needed in the later proofs.
In Section \ref{Bir}, we consider the 2-recollements of singularity categories over the triangular matrix algebra and prove Theorem \ref{thm:1.1}.
In Section \ref{Gd}, the 2-recollements for Gorenstein defect categories and stable categories of Gorenstein projective modules over the triangular matrix algebra are studied, including the proof of Theorem \ref{thm:1.2}.

\section{Preliminaries}\label{Pre}

In this section, we briefly recall some basic definitions, facts and notations needed in the sequel.

\vspace{0.2cm}
{\bf 2.1. Notations and conventions}
\vspace{0.2cm}

Throughout, all algebras are Artin algebras over a fixed commutative Artinian ring and all modules are finitely generated. For a given algebra $R$,
denote by $\mod R$ the category of left $R$-modules; right $R$-modules are viewed as left $R^{op}$-modules, where $R^{op}$ is the opposite algebra of $R$.
We use $\proj R$ to denote the subcategory of $\mod R$ consisting of projective modules. The $*$-bounded derived category of $\mod R$ and homotopy category of $\proj R$
will be denoted by $\D^*(\mod R)$ and $\K^*(\proj R)$ respectively, where $*\in\{blank,\ +,\ -, \ b\}$.

Usually, we use $_RM$ (resp. $M_R$) to denote a left (resp. right) $R$-module $M$, and the projective dimension of $_RM$ (resp. $M_R$) will be denoted by $\pd_RM$ (resp. $\pd M_R$).
For a subclass $\X$ of $\mod R$. Denote by $\X^\bot$ (resp. $^\bot\X$) the subcategory consisting of modules $M\in \mod R$ such that $\Ext_R^n(X,M)=0$ (resp. $\Ext_R^n(M,X)=0$) for any $X\in\X$ and $n\geq1$.
%
%
%
%
%
%
%
%
%
%
%

\vspace{0.2cm}

Let $$X^\bullet=\cdots\to X^{-1}\xrightarrow{d^{-1}}X^{0}\xrightarrow{d^{0}} X^{1}\to\cdots$$
be a complex in $\mod R$. For any integer $n$, we set $Z^n(X^\bullet)=\Ker d^n$, $B^n(X^\bullet)=\Im d^{n-1}$ and $H^n(X^\bullet)=Z^n(X^\bullet)/B^n(X^\bullet)$. $X^{\bullet}$ is called  acyclic (or exact) if $H^{n}(X^{\bullet})=0$ for any $n\in \mathbb{Z}$.

\vspace{0.2cm}
{\bf  2.2.  Gorenstein projective modules and Gorenstein perfect complexes}
\vspace{0.2cm}

Recall from \cite{AB,AM,Ho} that an acyclic complex $X^\bullet$ is called {\it totally acyclic} if each $X^i\in\proj R$ and $\Hom_R(X^\bullet,R)$ is acyclic. A module $M\in\mod R$ is {\it Gorenstein projective} if there exists some totally acyclic complex $X^\bullet$ such that $M\cong Z^0(X^\bullet)$.
Denote by $\Gproj R$ the subcategory of $\mod R$ consisting of Gorenstein projective modules.
Given a module $M\in\mod R$, the {\it Gorenstein projective dimension} $\Gpd_RM$ of $M$ is defined to be $\Gpd_RM=\inf\{n:$ there exists an exact sequence $0\to G_n\to \cdots\to G_1\to G_0\to M\to0$, where each $G_i\in\Gproj R$$\}$.

\begin{df}\label{df:2.4} (compare \cite{LZHZ}) {\rm A complex $X^\bullet\in\D^b(\mod R)$ is said to be {\it Gorenstein perfect} if $X^\bullet$ is isomorphic to some bounded complex consisting of Gorenstein projective modules in $\D^b(\mod R)$.
}\end{df}

Denote by $\Gperf(R)$ the subcategory of $\D^b(\mod R)$ consisting of Gorenstein perfect complexes.

\begin{rem}\label{rem:2.3} {\rm A Gorenstein perfect complex is called a complex with finite Gorenstein projective dimension in \cite{LZHZ}. Here we use the name of ``Gorenstein perfect'' because we find this kind of complexes reflects as the Gorenstein version of perfect complexes. For instance, it is not hard to see an $R$-module $M$ is Gorenstein perfect if and only if $\Gpd_RM<\infty$. Besides, $\Gperf(R)$ is the smallest thick subcategory of $\D^b(\mod R)$ containing $\Gproj R$. For more details, we refer the reader to appendix in \cite{LZHZ}.
}\end{rem}

\begin{lem}\label{lem:2.4}(see \cite[Proposition A.4]{LZHZ})  Let $X^\bullet\in\D^b(\mod R)$. If each $X^i$ is of finite Gorenstein projective dimension as an $R$-module, then $X^\bullet\in\Gperf(R)$.
\end{lem}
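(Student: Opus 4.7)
The plan is to induct on the number of nonzero components of $X^\bullet$, with the heavy lifting already encapsulated in Remark \ref{rem:2.3}: namely that $\Gperf(R)$ is the smallest thick subcategory of $\D^b(\mod R)$ containing $\Gproj R$, and that an $R$-module of finite Gorenstein projective dimension is itself Gorenstein perfect.

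For the base case I would handle a complex with a single nonzero component $M = X^n$. Then $X^\bullet \cong M[-n]$ in $\D^b(\mod R)$; since $\Gpd_R M < \infty$ gives $M \in \Gperf(R)$ by Remark \ref{rem:2.3}, and $\Gperf(R)$ is stable under shifts (being thick), we conclude $X^\bullet \in \Gperf(R)$.

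For the inductive step I would pick an integer $n$ so that both brutal truncations $\sigma^{\leq n} X^\bullet$ and $\sigma^{\geq n+1} X^\bullet$ are nonzero and each carries strictly fewer nonzero components than $X^\bullet$. The canonical short exact sequence of complexes
$$0 \longrightarrow \sigma^{\geq n+1} X^\bullet \longrightarrow X^\bullet \longrightarrow \sigma^{\leq n} X^\bullet \longrightarrow 0$$
yields a distinguished triangle in $\D^b(\mod R)$. Each component of either truncation is either zero or one of the $X^i$, and so has finite Gorenstein projective dimension by hypothesis. The inductive hypothesis therefore places both truncations in $\Gperf(R)$, and thickness of $\Gperf(R)$ then forces $X^\bullet \in \Gperf(R)$.

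I do not anticipate a genuine obstacle here: the only point requiring a moment of care is the choice of $n$ when the nonzero terms of $X^\bullet$ are not contiguous, which is handled by picking $n$ to split the set of nonzero indices into two smaller nonempty pieces. No deeper input about compatibility, Gorenstein defect categories, or triangular matrix algebras is needed, since the statement concerns a single ring $R$ and is purely a formal consequence of Remark \ref{rem:2.3}.
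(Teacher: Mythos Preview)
Your argument is correct. Note, however, that the paper does not actually supply a proof of this lemma: it simply records the statement with a reference to \cite[Proposition~A.4]{LZHZ}. There is therefore nothing in the present paper to compare against directly.

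That said, your induction-on-length argument via brutal truncations is the standard way to prove such a statement, and in fact the paper itself deploys exactly this technique later, in the proof of Proposition~\ref{prop:4.7}, to show that $\mathbb{R}\Hom_A(e_AT,-)$ preserves Gorenstein perfect complexes: they induct on $l(X^\bullet)$, peel off $X^0$ via the triangle $X^0[-1]\to X^\bullet_{\geq 1}\to X^\bullet\to X^0$, and invoke thickness. So your proposal is entirely in the spirit of the paper and would be accepted without hesitation.
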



\vspace{0.2cm}
{\bf 2.3. Singularity categories and Gorenstein defect categories}

Recall that the singularity category $\D_{sg}(R)$ of $R$ is defined to be the verdier quotient
$$\D_{sg}(R):=\D^b(\mod R)/\K^b(\proj R),$$ where complexes in $\K^b(\proj R)$ (up to isomorphisms) are the so-called {\it perfect complexes}.
This category was first introduced by Buchweitz \cite{Bu}, and later reconsidered by a lot of authors \cite{Be1,H2,O}. It is well-known that
$\Gproj R$ is a Frobenius category, and hence its stable category $\underline{\Gproj R}$ is a triangulated category \cite{H1}. By a fundamental result of Buchweitz, there exists
a fully faithful triangle functor $F:\underline{\Gproj R}\to\D_{sg}(R)$, which sends every Gorenstein projective module to the stalk complex concentrated in degree zero. Furthermore, $F$ is a triangle-equivalence provided that $R$ is Gorenstein (that is, the left and right self-injective dimensions of $R$ are finite). Consequently, $\Im F$ is a triangulated subcategory of $\D_{sg}(R)$. Following \cite{BJO}, the Verdier quotient $$\D_{def}(R):=\D_{sg}(R)/\Im F$$ is called the {\it Gorenstein defect category} of $R$.

\begin{lem}\label{lem:2.5}(see \cite[Theorem A.5]{LZHZ}) We have the following exact commutative diagram:
$$\xymatrix{0\ar[r]&\underline{\Gproj R}\ar[r]^F\ar[d]
&\D_{sg}(R)\ar[r]\ar@{=}[d]
& \D_{def}(R)\ar[d]\ar[r]&0\\
0\ar[r]&{\Gperf(R)/\K^b(\proj R)}\ar[r]
&\D_{sg}(R)\ar[r]
&{\D^b(\mod R)/\Gperf(R)}\ar[r]&0
}$$
with all vertical functors triangle-equivalences.
\end{lem}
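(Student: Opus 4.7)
The plan is to produce the two vertical triangle-equivalences first; once they are in place, both rows are automatically short exact sequences of Verdier quotients and commutativity of the square is tautological. The top row is built into the definitions: by Buchweitz's theorem $F$ is fully faithful, so $\underline{\Gproj R}\simeq\Im F$, and $\D_{def}(R):=\D_{sg}(R)/\Im F$ realizes the sequence by construction.

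For the left vertical arrow, a Gorenstein projective module is Gorenstein perfect as a stalk complex, and projective stalks vanish in $\Gperf(R)/\K^b(\proj R)$, so one obtains a well-defined triangulated functor $\Phi:\underline{\Gproj R}\to\Gperf(R)/\K^b(\proj R)$. Full faithfulness is the standard Buchweitz-style calculation: Hom-spaces in $\underline{\Gproj R}$, in $\D_{sg}(R)$, and in $\Gperf(R)/\K^b(\proj R)$ all coincide on Gorenstein projective stalks, because a morphism between Gorenstein projectives factoring through a perfect complex already factors through a projective module. For essential surjectivity, given $X^\bullet\in\Gperf(R)$ represented by a bounded complex $G^\bullet$ of Gorenstein projectives concentrated in degrees $[a,b]$, I would induct on $b-a$: using that $\Gproj R$ is closed under syzygies and that mapping cones of surjections onto projective modules lie in $\K^b(\proj R)$, one builds distinguished triangles that successively contract $G^\bullet$ to a shift of a single Gorenstein projective module modulo $\K^b(\proj R)$.

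For the bottom row, the inclusions $\K^b(\proj R)\subset\Gperf(R)\subset\D^b(\mod R)$ directly yield a short exact sequence of Verdier quotients together with the standard iterated-quotient identification
\[
\frac{\D^b(\mod R)/\K^b(\proj R)}{\Gperf(R)/\K^b(\proj R)}\;\simeq\;\frac{\D^b(\mod R)}{\Gperf(R)}.
\]
Combined with the equivalence $\Phi$, this produces a canonical triangle-equivalence $\D_{def}(R)\simeq\D^b(\mod R)/\Gperf(R)$ playing the role of the right vertical arrow, and the square commutes by the way all functors were defined on stalks. The main obstacle is the essential surjectivity of $\Phi$: proving that every bounded Gorenstein projective complex collapses to a single Gorenstein projective stalk in the singularity category is the technical heart, and relies on the syzygy induction above together with Lemma \ref{lem:2.4} to ensure the intermediate cones remain Gorenstein perfect.
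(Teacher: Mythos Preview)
The paper does not supply its own proof of this lemma; it simply cites \cite[Theorem A.5]{LZHZ}. So there is nothing in the paper to compare against directly, and your task reduces to giving a self-contained argument, which your outline essentially does.

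Your strategy is the standard one and is correct. A couple of remarks that would tighten it. First, for full faithfulness of $\Phi$ you do not need a separate computation: since $\K^b(\proj R)\subseteq\Gperf(R)\subseteq\D^b(\mod R)$ with $\Gperf(R)$ thick, the canonical functor $\Gperf(R)/\K^b(\proj R)\to\D_{sg}(R)$ is fully faithful, so $\Phi$ inherits full faithfulness from Buchweitz's functor $F$. Second, your essential surjectivity step can be said more cleanly without the syzygy language: the essential image of a fully faithful triangle functor is a triangulated subcategory, and the stupid filtration shows that $\Gperf(R)/\K^b(\proj R)$ is generated as a triangulated category by shifts of Gorenstein projective stalks, all of which lie in the image of $\Phi$; hence $\Phi$ is essentially surjective. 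Invoking Lemma~\ref{lem:2.4} here is harmless but unnecessary, since closure of $\Gperf(R)$ under cones already follows from its being thick (Remark~\ref{rem:2.3}). With these adjustments your argument is complete and almost certainly matches what is done in the cited appendix of \cite{LZHZ}.
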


\vspace{0.2cm}
{\bf 2.4. Recollements and 2-recollements}
\vspace{0.2cm}

Let $\T, \T'$ and $\T''$ be triangulated categories. A {\it recollement} \cite{BBD} of $\T$ relative to $\T'$ and $\T''$ is a diagram of triangulated categories and triangle functors
$$\xymatrix{\T'\ar[r]^{i_*}&\T\ar@/^1pc/[l]^{i^!}\ar@/_1pc/[l]_{i^*}\ar[r]^{j^*} &\T''\ar@/^1pc/[l]^{j_*}\ar@/_1pc/[l]_{j_!} }\eqno{(2.1)}$$
satisfying:

(R1) $(i^*,i_*)$, $(i_*,i^!)$, $(j_!,j^*)$ and $(j^*,j_*)$ are adjoint pairs;

(R2) $i_*$, $j_!$ and $j_*$ are fully faithful;

(R3) $\Im i_*=\Ker j^*.$\\
%
%
%

If $\T$, $\T'$ and $\T''$ in diagram (2.1) are abelian categories, and the six functors involving are additive functors. Then we call diagram (2.1) a {\it recollement of abelian categories}, see \cite{Hap,P,PSS} for details.

\begin{df}{\rm (see \cite{QH})  Let  $\T, \T'$ and $\T''$ be triangulated categories (resp. abelian categories). A {\it 2-recollement} of $\T$ relative to $\T'$ and $\T''$ is given by a diagram
$$\xymatrix{\T'\ar@/_2pc/[r]_{i_?}\ar@/^0.5pc/[r]^{i*}&\T\ar@/_2pc/[r]_{j^?}\ar@/^0.5pc/[l]^{i^!}\ar@/_2pc/[l]_{i^*}\ar@/^0.5pc/[r]^{j^*} &\T''\ar@/^0.5pc/[l]^{j_*}\ar@/_2pc/[l]_{j_!} }\eqno{(2.2)}$$
such that every consecutive three layers form a recollement.
}\end{df}

We remark that a 2-recollement is also called a ladder of height 2 in the sense of \cite{AKLY}, see also \cite{BGS} for instance.

\begin{lem}\label{lem:2.7} (compare \cite{LL}) Let (2.1) be a recollement of triangulated categories. Assume that $\N$, $\N'$ and $\N''$  are thick subcategories of $\T$, $\T'$ and $\T''$ respectively. The following statements are equivalent:
\begin{enumerate}
\item[(1)] (2.1) restricts to the following recollement: $$\xymatrix{\N'\ar[r]^{i_*}&\N\ar@/^1pc/[l]^{i^!}\ar@/_1pc/[l]_{i^*}\ar[r]^{j^*} &\N''\ar@/^1pc/[l]^{j_*}\ar@/_1pc/[l]_{j_!}. }\eqno{(2.3)}$$
\item[(2)] (2.1) induces the following recollement: $$\xymatrix{\T'/\N'\ar[r]^{\overline{{i_*}}}&\T/\N\ar@/^1pc/[l]^{\overline{i^!}}\ar@/_1.2pc/[l]_{\overline{i^*}}\ar[r]^{\overline{j^*}} &\T''/\N''\ar@/^1pc/[l]^{\overline{j_*}}\ar@/_1.2pc/[l]_{\overline{j_!}}, }\eqno{(2.4)}$$ where these six functors are induced by those in (2.1).
\item[(3)] $i^*(\N)\subseteq\N'$, $i_*(\N')\subseteq\N$, $j^*(\N)\subseteq\N''$ and $j_*(\N'')\subseteq\N$.
\end{enumerate}
\end{lem}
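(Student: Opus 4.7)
The plan is to prove the chain (1)$\Leftrightarrow$(3)$\Leftrightarrow$(2). The equivalence (1)$\Leftrightarrow$(3) is handled by pure triangle-chasing inside the recollement, while (3)$\Leftrightarrow$(2) is handled by the universal property of Verdier localization. The implication (1)$\Rightarrow$(3) is immediate from reading off the restrictions, and (2)$\Rightarrow$(3) is built in: the induced functors in (2.4) are well-defined only when the six original functors preserve the designated thick subcategories.

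The heart of the argument is (3)$\Rightarrow$(1), where the main subtlety is that condition (3) mentions only four functors but a recollement involves six. First I would establish the identification $\N\cap\Im i_*=i_*(\N')$: the inclusion $\supseteq$ is part of (3), and for $\subseteq$, if $Y=i_*Z\in\N$ then $Z\cong i^*i_*Z=i^*Y\in i^*(\N)\subseteq\N'$ since $i^*i_*\cong\id_{\T'}$. Next, for $X\in\N$, the defining triangle $i_*i^!X\to X\to j_*j^*X\to$ together with $j_*j^*X\in\N$ (from $j^*(\N)\subseteq\N''$ and $j_*(\N'')\subseteq\N$) and thickness of $\N$ forces $i_*i^!X\in\N\cap\Im i_*=i_*(\N')$, hence $i^!X\in\N'$. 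Symmetrically, for $Y\in\N''$, set $X=j_*Y\in\N$; then $j^*j_*\cong\id$ gives $j^*X=Y$, and the triangle $j_!j^*X\to X\to i_*i^*X\to$ combined with $i_*i^*X\in\N$ yields $j_!Y\in\N$ by thickness. Once all six functors restrict, the adjunctions (R1) restrict automatically, fully-faithfulness (R2) is preserved since $i_*|_{\N'}$, $j_!|_{\N''}$, $j_*|_{\N''}$ are restrictions of fully faithful functors, and (R3) reads $\Ker(j^*|_\N)=\N\cap\Im i_*=\Im(i_*|_{\N'})$ by the identification already proved.

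For (3)$\Rightarrow$(2), the induced functors $\overline{i^*},\overline{i_*},\overline{i^!},\overline{j_!},\overline{j^*},\overline{j_*}$ exist by the universal property of the Verdier quotient, precisely because (3) guarantees each original functor sends the relevant thick subcategory into the corresponding one. Adjunctions descend to quotients by a standard localization argument. Fully-faithfulness of $\overline{i_*}$, $\overline{j_!}$, $\overline{j_*}$ follows from the persistence of the unit/counit isomorphisms $i^*i_*\cong i^!i_*\cong\id$ and $j^*j_!\cong j^*j_*\cong\id$ in the quotient categories. For (R3) in the quotient, the inclusion $\Im\overline{i_*}\subseteq\Ker\overline{j^*}$ comes from $j^*i_*=0$; conversely, if $\overline{j^*}\bar X=0$ in $\T''/\N''$ then $j^*X\in\N''$, so by the step already established, $j_!j^*X\in\N$, and the triangle $j_!j^*X\to X\to i_*i^*X\to$ shows that $\bar X\cong\overline{i_*}\,\overline{i^*}\bar X$ in $\T/\N$.

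The main obstacle is the step in (3)$\Rightarrow$(1) where only four conditions are assumed yet $i^!$ and $j_!$ must also be shown to restrict; the trick is to exploit $i^*i_*\cong\id$ and $j^*j_*\cong\id$ to funnel both $i^!X$ and $j_!Y$ into images of $i_*$ and $j_*$ respectively through the two defining triangles of the recollement. Everything else is formal consequence or standard Verdier-quotient technology.
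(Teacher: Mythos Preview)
Your proof is correct. The paper takes a different, much shorter route: it notes that $(1)\Rightarrow(3)$ and $(2)\Rightarrow(3)$ are trivial, then from (3) establishes the \emph{equalities} $i^*(\N)=\N'$ and $j^*(\N)=\N''$ (by observing that any $X'\in\N'$ arises as $i^*(i_*X')$ with $i_*X'\in\N$, and similarly for $j^*$), and finally invokes \cite[Remark 2.4 and Proposition 2.5]{LL} to obtain both $(3)\Rightarrow(1)$ and $(3)\Rightarrow(2)$. In other words, the paper outsources the triangle-chasing you carry out to the cited reference.

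Your approach is genuinely self-contained: you bypass the need for the equalities $i^*(\N)=\N'$, $j^*(\N)=\N''$ by working instead with the identification $\N\cap\Im i_*=i_*(\N')$ and the two canonical recollement triangles to force $i^!$ and $j_!$ to restrict, then handle the quotient recollement directly via Verdier localization. This buys you independence from \cite{LL} and makes the mechanism transparent (in particular, the passage from four preservation conditions to six). The paper's approach buys brevity and highlights that the result is essentially a repackaging of what is already in \cite{LL} once one checks the surjectivity of $i^*|_\N$ and $j^*|_\N$ onto $\N'$ and $\N''$.
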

\begin{proof} $(1)\Rightarrow(3)$ and $(2)\Rightarrow(3)$ are trivial. Now assume that conditions in (3) are satisfied, we claim $i^*(\N)=\N'$ and $j^*(\N)=\N''$. To do this, let $X'\in\N'$. Put $X=i_*X'$, it follows that $X\in\N$. Hence $X'\cong i^*i_*(X')\cong i^*(X)$ and then $X'\in i^*(\N)$. Thus $i^*(\N)=\N'$ as desired. Similarly, one could obtain $j^*(\N)=\N''$. Therefore, the claim follows. Now we infer $(3)\Rightarrow(1)$ and $(3)\Rightarrow(2)$
from \cite[Remark 2.4 and Proposition 2.5]{LL}.
\end{proof}

Let $R$ be an Artin algebra and $e\in R$ an idempotent. Recall from \cite[Chapter 6]{G} that the Schur functor $S_e:\mod R\to \mod eRe$ associative to $e$ is defined to be $S_e(X)=eX$ for any $X\in\mod R$.
Clearly, $S_e$ admits a fully faithful left adjoint $Re\otimes_{eRe}-:\mod eRe\to\mod R$
and a fully faithful right adjoint
$\Hom_{eRe}(eR,-):\mod eRe\to\mod R.$
Denote by $i_{1-e}:\mod R/ReR\to\mod R$ the canonical inclusion functor induced by the natural homomorphism $R\to R/ReR$. We have the following

\begin{exa}\label{exa:2.8} Let $R$ be an Artin algebra and $e\in R$ an idempotent.
\begin{enumerate}
\item[(1)] (see \cite{P,PSS}) We have the following recollement of module categories:
$$\xymatrix@=2.5cm{\mod R/ReR\ar[r]^{i_{1-e}}& \mod R\ar@/^1pc/[l]^{\Hom_R(R/ReR,-)}\ar@/_1.5pc/[l]_{R/ReR\otimes_R-}\ar[r]^{S_e} & \mod eRe\ar@/^1pc/[l]^{\Hom_{eRe}(eR,-)}\ar@/_1.5pc/[l]_{Re\otimes_{eRe}-}.}\eqno{(2.5)}$$
\item[(2)]  (see \cite{CPS2,M2}) We have the following recollement of bounded derived categories:
$$\xymatrix@=2.5cm{\D^b(\mod R/ReR)\ar[r]^{\D^b(i_{1-e})}&\D^b( \mod R)\ar@/^1pc/[l]^{\mathbb{R}\Hom_R(R/ReR,-)}\ar@/_1.5pc/[l]_{R/ReR\otimes^\mathbb{L}_R-}\ar[r]^{\D^b(S_e)} &\D^b(\mod eRe )\ar@/^1pc/[l]^{\mathbb{R}\Hom_{eRe}(eR,-)}\ar@/_1.5pc/[l]_{Re\otimes^\mathbb{L}_{eRe}-} }\eqno{(2.6)}$$ such that all functors are the derived versions of those in (2.5)
if and only if the following conditions are satisfied:
(i) $\Ext_R^n(R/ReR,R/ReR)=0$ for every integer $n\geq1$;
(ii) $\pd_RR/ReR<\infty$;
(iii) $\pd {R/ReR}_R<\infty$.
\end{enumerate}
\end{exa}

%

\section{2-recollement of singularity categories over triangular matrix algebras}\label{Bir}

In this section, $T=\left(
                                                                                 \begin{array}{cc}
                                                                                   A & M \\
                                                                                   0 & B \\
                                                                                 \end{array}
                                                                               \right)$ is a triangular matrix algebra with its corner algebras $A$ and $B$ Artinian and $_AM_B$ an $A$-$B$-bimodule.
Liu-Lu \cite{LL} and Zhang \cite{ZP} gave sufficient conditions for the existence of a recollement of $\D_{sg}(T)$ relative to $\D_{sg}(A)$ and $\D_{sg}(B)$.
In this section, we provide necessary and sufficient conditions for the existence of such recollement. Besides, we also give equivalent characterizations when there is a 2-recollement of  $\D_{sg}(T)$ relative to $\D_{sg}(A)$ and $\D_{sg}(B)$.
We first recall some basic definitions needed in the sequel.

Recall that a left $T$-module is identified with a triple $\left(
                                                                                        \begin{array}{c}
                                                                                          X \\
                                                                                          Y \\
                                                                                        \end{array}
                                                                                      \right)_\phi$,
 where $X\in \mod A$, $Y\in \mod B$ and $\phi:M\otimes_B Y\to X$ ia an $A$-morphism. If there is no possible confusion, we shall omit the morphism $\phi$ and write $\left(
                                                                                        \begin{array}{c}
                                                                                          X \\
                                                                                          Y \\
                                                                                        \end{array}
                                                                                      \right)$ for short.
 Analogously, a left $T$-module $\left(
\begin{array}{c}
 X \\
  Y \\
\end{array}
\right)_\phi$ is also identified with the triple $\left(
\begin{array}{c}
 X \\
  Y \\
\end{array}
\right)_{\widetilde{\phi}}$, where $\widetilde{\phi}:Y\to\Hom_A(M,X)$ is a $B$-morphism defined by $\widetilde{\phi}(y)(m)=\phi(m\otimes y)$ for any $m\in M$ and $y\in Y$.

A $T$-morphism
$\left(
\begin{array}{c}
 X \\
  Y \\
\end{array}
\right)_\phi\to \left(
\begin{array}{c}
X' \\
Y' \\
\end{array}
\right)_{\phi'}$
will be identified with a pair
$\left(
\begin{array}{c}
f \\
g \\
\end{array}
\right)$,
where $f\in\Hom_A(X,X')$ and $g\in\Hom_B(Y,Y')$, such that the following diagram
$$\xymatrix{M\otimes_B Y\ar[r]^\phi\ar[d]_{1\otimes g} & X\ar[d]_f\\
M\otimes_B Y'\ar[r]^{\phi'} &X'
}$$ is commutative.

A sequence $0\to\left(
\begin{array}{c}
 X_1 \\
 Y_1 \\
\end{array}
\right)_{\phi_1}\xrightarrow{\left(
\begin{array}{c}
f_1 \\
g_1 \\
\end{array}
\right)}\left(
\begin{array}{c}
 X_2 \\
 Y_2 \\
\end{array}
\right)_{\phi_2}\xrightarrow{\left(
\begin{array}{c}
f_2 \\
g_2 \\
\end{array}
\right)}\left(
\begin{array}{c}
 X_3 \\
 Y_3 \\
\end{array}
\right)_{\phi_3}\to0$
in $\mod T$ is exact if and only if $0\to X_1\xrightarrow{f_1}X_2\xrightarrow{f_2}X_3\to0$ and $0\to Y_1\xrightarrow{g_1}Y_2\xrightarrow{g_2}Y_3\to0$ are exact in $\mod A$ and $\mod B$, respectively.
A $T$-module $\left(
                                  \begin{array}{c}
                                    X \\
                                    Y \\
                                  \end{array}
                                \right)_\phi$ is projective if and only if $Y\in\proj B$ and $\phi:M\otimes_BY\to X$ is an injective $A$-morphism with $\Coker\phi\in\proj A$.
We refer the reader to \cite[Section III.2]{ARS} for more details.

\vspace{0.2cm}
Let $e_A=\left(
                           \begin{array}{cc}
                             1 & 0 \\
                             0 & 0 \\
                           \end{array}
                         \right)$ and $e_B=\left(
                           \begin{array}{cc}
                             0 & 0 \\
                             0 & 1 \\
                           \end{array}
                         \right)$ be idempotents of $T$. It is known that $A\cong e_ATe_A\cong T/Te_BT$ and $B\cong e_BTe_B\cong T/Te_AT$ as algebras.
                         As a consequence of Example \ref{exa:2.8} (1), we have the following observation.

\begin{lem}\label{lem:3.1} We have the following 2-recollement of module categories:
$$\xymatrix@=3cm{\mod A\ar@/_3pc/[r]_{\Hom_A(e_AT,-)}\ar@/^1pc/[r]^{i_{e_A}}& \mod T\ar@/_3pc/[r]_{\Hom_T(B,-)}\ar@/^1pc/[l]^{S_{e_A}}\ar@/_3pc/[l]_{A\otimes_T-}\ar@/^1pc/[r]^{S_{e_B}} & \mod B\ar@/^1pc/[l]^{i_{e_B}}\ar@/_3pc/[l]_{Te_B\otimes_B-},}\eqno{(3.1)}$$
where $A\otimes_T\left(
                     \begin{array}{c}
                       X \\
                       Y \\
                     \end{array}
                   \right)_\phi\cong\Coker\phi$, $i_{e_A}(X)\cong\left(
                     \begin{array}{c}
                       X \\
                       0 \\
                     \end{array}
                   \right)$ and $\Hom_A(e_AT,X)\cong\left(
                     \begin{array}{c}
                       X \\
                       \Hom_A(M,X) \\
                     \end{array}
                   \right)_{\widetilde{id}}$;
while $Te_B\otimes_BY\cong\left(
                     \begin{array}{c}
                      M\otimes_BY \\
                       Y \\
                     \end{array}
                   \right)$, $i_{e_B}(Y)\cong\left(
                     \begin{array}{c}
                       0 \\
                       Y \\
                     \end{array}
                   \right)$ and $\Hom_T(B,\left(
                     \begin{array}{c}
                       X \\
                       Y \\
                     \end{array}
                   \right)_{\phi})\cong\Ker\widetilde{\phi}$.
\end{lem}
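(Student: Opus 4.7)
My plan is to apply Example~\ref{exa:2.8}(1) twice to $T$: once with the idempotent $e_B$ and once with $e_A$. Using $T/Te_BT\cong A$, $e_BTe_B\cong B$, and symmetrically $T/Te_AT\cong B$, $e_ATe_A\cong A$, these produce respectively the recollements
$$\mod A\xrightarrow{i_{e_A}}\mod T\xrightarrow{S_{e_B}}\mod B\qquad\text{and}\qquad\mod B\xrightarrow{i_{e_B}}\mod T\xrightarrow{S_{e_A}}\mod A,$$
each carrying its own six-functor structure directly from Example~\ref{exa:2.8}(1).

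To glue these two recollements into the claimed 2-recollement, I would verify the natural isomorphisms
$$Te_A\otimes_A-\cong i_{e_A},\quad \Hom_T(A,-)\cong S_{e_A},\quad B\otimes_T-\cong S_{e_B},\quad \Hom_B(e_BT,-)\cong i_{e_B},$$
each obtained by a direct computation on a triple $\bigl(\begin{smallmatrix}X\\Y\end{smallmatrix}\bigr)_\phi$. These identifications make the left adjoint chain produced by one application extend the right adjoint chain produced by the other, yielding on each side of $\mod T$ a four-term adjoint ladder
$$A\otimes_T-\dashv i_{e_A}\dashv S_{e_A}\dashv\Hom_A(e_AT,-)\quad\text{and}\quad Te_B\otimes_B-\dashv S_{e_B}\dashv i_{e_B}\dashv\Hom_T(B,-).$$
The lower three functors on each side then recover the first recollement above and the upper three recover the second; consequently every three consecutive layers of the resulting eight-functor diagram form a recollement, which is the 2-recollement condition.

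The explicit formulas for the six displayed functors on a triple $\bigl(\begin{smallmatrix}X\\Y\end{smallmatrix}\bigr)_\phi$ are then obtained by routine calculation in the triple notation: $A\otimes_T\bigl(\begin{smallmatrix}X\\Y\end{smallmatrix}\bigr)_\phi$ collapses the image of $\phi$ and so gives $\Coker\phi$; $\Hom_T(B,-)$ picks out $\Ker\widetilde\phi$; the formulas for $i_{e_A}$, $i_{e_B}$, $Te_B\otimes_B-$ and $\Hom_A(e_AT,-)$ are read off similarly. The main technical obstacle is the pair of cross-identifications $Te_A\otimes_A-\cong i_{e_A}$ and $\Hom_B(e_BT,-)\cong i_{e_B}$ (together with their Hom-side analogues): these are precisely what force the two separately-derived recollements to interlock into a single 2-recollement rather than sit as two unrelated structures.
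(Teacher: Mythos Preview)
Your proposal is correct and follows essentially the same route as the paper: apply Example~\ref{exa:2.8}(1) with $e=e_B$ and with $e=e_A$, then glue the two resulting recollements into a single 2-recollement. The only minor difference is emphasis: you spell out the cross-identifications $Te_A\otimes_A-\cong i_{e_A}$, $\Hom_B(e_BT,-)\cong i_{e_B}$, etc., that justify the gluing, while the paper leaves this implicit and instead computes the explicit functor formulas via adjunction and the Yoneda Lemma rather than by direct calculation in the triple notation.
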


\begin{proof} To take $R=T$ and $e=e_B$ (resp. $e=e_A$) as in Example \ref{exa:2.8} (1), we have the following two recollements of module categories:
$$\xymatrix@=2.5cm{\mod A\ar[r]^{i_{e_A}}& \mod T\ar@/^1pc/[l]^{S_{e_A}}\ar@/_1.5pc/[l]_{A\otimes_T-}\ar[r]^{S_{e_B}} & \mod B\ar@/^1pc/[l]^{i_{e_B}}\ar@/_1.5pc/[l]_{Te_B\otimes_B-},}$$
$$\xymatrix@=2.5cm{\mod B\ar[r]^{i_{e_B}}& \mod T\ar@/^1pc/[l]^{\Hom_T(B,-)}\ar@/_1.5pc/[l]_{S_{e_B}}\ar[r]^{S_{e_A}} & \mod A\ar@/^1pc/[l]^{\Hom_A(e_AT,-)}\ar@/_1.5pc/[l]_{i_{e_A}}.}$$
To glue them together, we get the diagram (3.1). We will use the adjoint functors to get the expressions of the functors in (3.1). Indeed, let $X\in\mod A$ and $\left(\begin{array}{c}
                                                                                                                                                                                      X' \\
                                                                                                                                                                                      Y' \\
                                                                                                                                                                                    \end{array}
                                                                                                                                                                                  \right)_{\phi'}\in\mod T$.
We have $$\Hom_T(\left(
                     \begin{array}{c}
                       X' \\
                       Y' \\
                     \end{array}
                   \right)_{\phi'},\Hom_A(e_AT,X))\cong\Hom_A(S_{e_A}(\left(
                     \begin{array}{c}
                       X' \\
                       Y' \\
                     \end{array}
                   \right)_{\phi'}),X)\cong\Hom_A(X',X)$$
                   \hspace{1.2cm}$\cong\Hom_T(\left(
                     \begin{array}{c}
                       X' \\
                       Y' \\
                     \end{array}
                   \right)_{\widetilde{\phi'}},\left(
                     \begin{array}{c}
                       X \\
                       \Hom_A(M,X) \\
                     \end{array}
                   \right)_{\widetilde{id}})$.

                   \noindent By Yoneda Lemma, we get $\Hom_A(e_AT,X)\cong\left(
                     \begin{array}{c}
                       X \\
                       \Hom_A(M,X) \\
                     \end{array}
                   \right)_{\widetilde{id}}$. Similarly, one could obtain the expressions of another functors.
\end{proof}

Since the module category (over an arbitrary algebra) is a subcategory of its bounded derived category, it is natural to ask whether and when the 2-recollement (3.1) lifts to a 2-recollement of their bounded derived categories.
We provide the following necessary and sufficient conditions for this question.

\begin{lem}\label{lem:3.2} Let $T=\left(
                                                                                 \begin{array}{cc}
                                                                                   A & M \\
                                                                                   0 & B \\
                                                                                 \end{array}
                                                                               \right)$ be a triangular matrix algebra with $_AM_B$ an $A$-$B$-bimodule.
\begin{enumerate}
\item[(1)] (compare \cite{CL}) We have the following recollement of bounded derived categories: $$\xymatrix@=2.5cm{\D^b(\mod A)\ar[r]^{\D^b(i_{e_A})}& \D^b(\mod T)\ar@/^1pc/[l]^{\D^b(S_{e_A})}\ar@/_1.5pc/[l]_{A\otimes^\mathbb{L}_T-}\ar[r]^{\D^b(S_{e_B})} & \D^b(\mod B)\ar@/^1pc/[l]^{\D^b(i_{e_B})}\ar@/_1.5pc/[l]_{Te_B\otimes^\mathbb{L}_B-},}\eqno{(3.2)}$$ such that these six functors are the derived versions of those in (3.1) if and only if $\pd M_B<\infty$.
\item[(2)] We have the following recollement of bounded derived categories: $$\xymatrix@=2.5cm{\D^b(\mod B)\ar[r]^{\D^b(i_{e_B})}& \D^b(\mod T)\ar@/^1pc/[l]^{\mathbb{R}\Hom_T(B,-)}\ar@/_1.5pc/[l]_{\D^b(S_{e_B})}\ar[r]^{\D^b(S_{e_A})} & \D^b(\mod A)\ar@/^1pc/[l]^{\mathbb{R}\Hom_A(e_AT,-)}\ar@/_1.5pc/[l]_{\D^b(i_{e_A})},}\eqno{(3.3)}$$ such that these six functors are the derived versions of those in (3.1) if and only if $\pd_AM<\infty$.
\item[(3)] We have the following 2-recollement of bounded derived categories:
$$\xymatrix@=3cm{\D^b(\mod A)\ar@/_3pc/[r]_{\mathbb{R}\Hom_A(e_AT,-)}\ar@/^1pc/[r]^{\D^b(i_{e_A})}& \D^b(\mod T)\ar@/_3pc/[r]_{\mathbb{R}\Hom_T(B,-)}\ar@/^1pc/[l]^{\D^b(S_{e_A})}\ar@/_3pc/[l]_{A\otimes^\mathbb{L}_T-}\ar@/^1pc/[r]^{\D^b(S_{e_B})} & \D^b(\mod B)\ar@/^1pc/[l]^{\D^b(i_{e_B})}\ar@/_3pc/[l]_{Te_B\otimes^\mathbb{L}_B-}}\eqno{(3.4)}$$
if and only if $\pd_AM<\infty$ and $\pd M_B<\infty$.
\end{enumerate}
\end{lem}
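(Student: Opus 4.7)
The plan is to deduce all three parts from Example~\ref{exa:2.8}(2) applied to $R=T$, taking $e=e_B$ for part~(1) and $e=e_A$ for part~(2). A direct matrix computation gives $Te_BT=Te_B$ and $Te_AT=e_AT$, so that $T/Te_BT\cong A$ and $T/Te_AT\cong B$ as algebras, while $e_ATe_A\cong A$ and $e_BTe_B\cong B$; in particular $A\cong Te_A$ is projective as a left $T$-module and $B\cong e_BT$ is projective as a right $T$-module. With these identifications, one checks routinely that the six derived functors produced by Example~\ref{exa:2.8}(2) agree (up to natural isomorphism) with those listed in~(3.2) and~(3.3); the only non-tautological point is that the $\mathbb{R}\Hom$ appearing on the appropriate side degenerates, by the above projectivity, to the corresponding Schur functor described in Lemma~\ref{lem:3.1}.

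For part~(1), the three conditions of Example~\ref{exa:2.8}(2) become (i)~$\Ext_T^n(A,A)=0$ for $n\ge 1$, (ii)~$\pd_T A<\infty$, and (iii)~$\pd A_T<\infty$. Since $A\cong Te_A$ is projective as a left $T$-module, (i) and (ii) hold automatically, and the content of part~(1) reduces to the equivalence $\pd A_T<\infty\Leftrightarrow \pd M_B<\infty$. I would prove this via the short exact sequence of right $T$-modules
\[
0\to N\to e_AT\to A\to 0,
\]
where $N=\begin{pmatrix}0 & M\\ 0 & 0\end{pmatrix}\cong M\otimes_B e_BT$ has right $T$-action factoring through $T\to B=T/Te_AT$. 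Since $e_AT$ is projective as a right $T$-module, $\pd A_T<\infty$ iff $\pd N_T<\infty$; and because $e_BT$ is free of rank one as a left $B$-module and projective (hence flat) as a right $T$-module, tensoring a projective $B$-resolution of $M_B$ with $e_BT$ gives a projective $T$-resolution of $N$ (yielding ``$\Leftarrow$''), while a standard change-of-rings argument using $\Tor_i^T(e_BT,B)=0$ for $i>0$ yields the converse inequality.

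Part~(2) is entirely dual: take $e=e_A$; then $B\cong e_BT$ is projective as a right $T$-module so condition~(iii) is automatic, and condition~(i) holds because a projective left $T$-resolution of $B$ past degree~$0$ involves only modules of the form $\begin{pmatrix}P\\ 0\end{pmatrix}$ with $P\in\proj A$, for which $\Hom_T$ to $\begin{pmatrix}0\\ B\end{pmatrix}\cong B$ vanishes. The remaining condition $\pd_T B<\infty$ is equivalent to $\pd_A M<\infty$ via the short exact sequence of left $T$-modules
\[
0\to\begin{pmatrix}M\\ 0\end{pmatrix}\to Te_B\to B\to 0
\]
and the analogous pull-back/change-of-rings analysis along $T\to A$, this time using the bimodule $Te_A$ in place of $e_BT$. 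Finally, part~(3) is immediate from (1) and (2): by the very definition of a 2-recollement, the diagram (3.4) is a 2-recollement exactly when its two consecutive three-layer sub-diagrams, namely (3.2) and (3.3), are both recollements, hence exactly when $\pd_A M<\infty$ and $\pd M_B<\infty$ both hold.

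The main technical obstacle is the equivalence $\pd A_T<\infty\Leftrightarrow \pd M_B<\infty$ (and its dual). While the ``if'' direction is a straightforward resolution-by-tensoring argument with the bimodule $e_BT$ (respectively $Te_A$), the ``only if'' direction is more delicate and genuinely exploits the triangular structure of $T$ through the fact that $e_BT$ (respectively $Te_A$) is projective on both sides, via a change-of-rings comparison along the surjection $T\to B$ (respectively $T\to A$).
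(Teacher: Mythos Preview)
Your proposal is correct and follows essentially the same approach as the paper: both apply Example~\ref{exa:2.8}(2) with $e=e_B$ for part~(1) and $e=e_A$ for part~(2), verify conditions (i)--(iii) there, and deduce (3) from (1) and (2). The only difference is that where you sketch a direct change-of-rings argument for the equivalences $\pd A_T<\infty\Leftrightarrow\pd M_B<\infty$ and $\pd_TB<\infty\Leftrightarrow\pd_AM<\infty$, the paper simply cites \cite[Lemma~2.4]{LZHZ}; your condition~(i) verification via the explicit shape of the projective resolution of $_TB$ is likewise just a spelled-out version of the paper's remark that $\Ext^n_T(B,B)\cong\Ext^n_B(B,B)=0$.
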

\begin{proof} We only prove (2), (1) could be obtained by a similar argument (see also \cite[Theorem 2]{CL} for the proof of the ``if'' part) and (3) is a consequence of (1) and (2).
We proceed by letting $R=T$ and $e=e_A$ as in Example \ref{exa:2.8} (2). Note that $_TB\cong\left(
                                                                                                                                               \begin{array}{c}
                                                                                                                                                0 \\
                                                                                                                                                 B \\
                                                                                                                                               \end{array}
                                                                                                                                             \right)$ as left $T$-modules and $B_T\cong e_BT$ as right $T$-modules.
It follows that $B_T$ is projective and condition (iii) in Example \ref{exa:2.8} (2) follows. Notice that $\Ext^n_T(B,B)\cong\Ext^n_T(\left(
                                                                                                                                               \begin{array}{c}
                                                                                                                                                0 \\
                                                                                                                                                 B \\
                                                                                                                                               \end{array}
                                                                                                                                             \right)
,\left(
   \begin{array}{c}
     0 \\
     B \\
   \end{array}
 \right)
)\cong\Ext^n_B(B,B)=0$ for any $n\geq1$, where the last isomorphism could be easily checked by choosing a projective resolution of $\left(
                                                                                                                                  \begin{array}{c}
                                                                                                                                    0 \\
                                                                                                                                    B \\
                                                                                                                                  \end{array}
                                                                                                                                \right)$. So condition (i) in Example \ref{exa:2.8} (2) follows.
                                                                                                                                From \cite[Lemma 2.4]{LZHZ}, we know $\pd_TB<\infty$ if and only if $\pd_AM<\infty$.
Following Example \ref{exa:2.8} (2), we have the recollement (3.3) if and only if $\pd_AM<\infty$.
\end{proof}

{\bf  Proof of Theorem \ref{thm:1.1}.} The proof of Theorem \ref{thm:1.1} is divided into the following two parts, including Proposition \ref{prop:3.3} and Proposition \ref{prop:3.5}.\qed

\begin{prop}\label{prop:3.3}  Let $T=\left(
                                                                                 \begin{array}{cc}
                                                                                   A & M \\
                                                                                   0 & B \\
                                                                                 \end{array}
                                                                               \right)$ be a triangular matrix algebra with $_AM_B$ an $A$-$B$-bimodule. Assume that $\pd M_B<\infty$. Then the following statements are equivalent.
\begin{enumerate}
\item[(1)] We have the following recollement of perfect complexes
$$\xymatrix@=2.5cm{\K^b(\proj A)\ar[r]^{\D^b(i_{e_A})}& \K^b(\proj T)\ar@/^1pc/[l]^{\D^b(S_{e_A})}\ar@/_1.5pc/[l]_{A\otimes^\mathbb{L}_T-}\ar[r]^{\D^b(S_{e_B})} & \K^b(\proj B)\ar@/^1pc/[l]^{\D^b(i_{e_B})}\ar@/_1.5pc/[l]_{Te_B\otimes^\mathbb{L}_B-}},\eqno{(3.5)}$$
where these six functors are the restrictions of those in (3.2).
\item[(2)] We have the following recollement of singularity categories
$$\xymatrix@=2.5cm{\D_{sg}(A)\ar[r]^{\overline{\D^b(i_{e_A})}}& \D_{sg}(T)\ar@/^1pc/[l]^{\overline{\D_{sg}(S_{e_A})}}\ar@/_1.5pc/[l]_{\overline{A\otimes^\mathbb{L}_T-}}\ar[r]^{\overline{\D_{sg}(S_{e_B})}} & \D_{sg}(B)\ar@/^1pc/[l]^{\overline{\D^b(i_{e_B})}}\ar@/_1.5pc/[l]_{\overline{Te_B\otimes^\mathbb{L}_B-}}},\eqno{(3.6)}$$ where these six functors are induced by those in (3.2).
\item[(3)] $\pd_A M<\infty$.
\end{enumerate}
\end{prop}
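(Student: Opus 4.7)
Since $\pd M_B<\infty$, Lemma \ref{lem:3.2}(1) delivers the recollement (3.2) of bounded derived categories. The subcategories $\K^b(\proj A)\subset\D^b(\mod A)$, $\K^b(\proj T)\subset\D^b(\mod T)$ and $\K^b(\proj B)\subset\D^b(\mod B)$ are thick, with Verdier quotients equal to the respective singularity categories. Applying Lemma \ref{lem:2.7} to (3.2) with these three thick subcategories, statements (1) and (2) both translate into the single condition that each of the six functors in (3.2) carries perfect complexes to perfect complexes. Hence the whole proposition reduces to showing this ``preservation of perfect complexes'' is equivalent to (3).

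For $(3)\Rightarrow(1)$, I plan to verify case by case, using the explicit formulas from Lemma \ref{lem:3.1} and the characterization of projective $T$-modules recalled earlier in this section, that each functor sends projective modules to perfect complexes. The functors $\D^b(i_{e_A})$, $\D^b(S_{e_B})$, $A\otimes^{\mathbb{L}}_T-$ and $Te_B\otimes^{\mathbb{L}}_B-$ send projective modules directly to projective modules (for instance $A\otimes_T\binom{X}{Y}_\phi=\Coker\phi\in\proj A$). For $\D^b(i_{e_B})$, the short exact sequence
$$0\to\binom{M\otimes_BY}{0}\to Te_B\otimes_BY\to\binom{0}{Y}\to 0\qquad(Y\in\proj B)$$
has middle term $T$-projective and left term $i_{e_A}(M\otimes_BY)$; since $\pd_A(M\otimes_BY)\le\pd_AM<\infty$ and $i_{e_A}$ is exact with $i_{e_A}(\proj A)\subseteq\proj T$, the left term is perfect over $T$, forcing $\binom{0}{Y}=i_{e_B}(Y)$ to be perfect. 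For $\D^b(S_{e_A})$, a projective $T$-module $\binom{X}{Y}_\phi$ fits into a short exact sequence $0\to M\otimes_BY\to X\to\Coker\phi\to 0$ in $\mod A$ which splits (since $\Coker\phi\in\proj A$), whence $\pd_AX\le\pd_AM<\infty$; a bounded complex of projective $T$-modules is therefore sent to a bounded complex of $A$-modules of finite projective dimension, which is perfect by standard thick-subcategory reasoning.

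For $(1)\Rightarrow(3)$, I will test on the single object $B\in\proj B$: hypothesis (1) forces $i_{e_B}(B)=\binom{0}{B}$ to be a perfect $T$-complex. From the short exact sequence $0\to\binom{M}{0}\to Te_B\to\binom{0}{B}\to 0$ with $Te_B$ projective, one deduces $\pd_Ti_{e_A}(M)<\infty$. The heart of the argument is to identify $\pd_Ti_{e_A}(M)$ with $\pd_AM$. Since $i_{e_A}$ is exact and sends projectives to projectives, it maps an $A$-projective resolution of $M$ to a $T$-projective resolution of $i_{e_A}(M)$; combining this with the adjunction $(i_{e_A},S_{e_A})$ and the equality $S_{e_A}\circ i_{e_A}=\id$, one obtains $\Ext^n_T(i_{e_A}(M),i_{e_A}(X))\cong\Ext^n_A(M,X)$ for every $X\in\mod A$ and every $n\ge 0$. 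The finiteness of $\pd_Ti_{e_A}(M)$ thus forces $\Ext^n_A(M,-)=0$ for $n\gg 0$, giving $\pd_AM<\infty$. This last identification is where the main conceptual obstacle lies; the preservation checks in the preceding paragraph are essentially computational once it is granted.
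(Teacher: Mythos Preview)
Your proposal is correct and follows the same overall architecture as the paper's proof: use Lemma \ref{lem:3.2}(1) to get the ambient recollement, invoke Lemma \ref{lem:2.7} for the equivalence $(1)\Leftrightarrow(2)$, and for $(1)\Rightarrow(3)$ feed $B$ into $i_{e_B}$ and read off $\pd_T\binom{M}{0}<\infty$ from the short exact sequence $0\to\binom{M}{0}\to\binom{M}{B}\to\binom{0}{B}\to0$.

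The only substantive difference is that the paper outsources two steps to references, while you carry them out by hand. For $(3)\Rightarrow(2)$ the paper simply cites \cite[Theorem 3.2]{LL}; your direct functor-by-functor check of preservation of perfect complexes is exactly the content of that citation, and your treatment of $\D^b(S_{e_A})$ and $\D^b(i_{e_B})$ is clean. For the passage from $\pd_T i_{e_A}(M)<\infty$ to $\pd_AM<\infty$ the paper cites \cite[Lemma 2.3]{LZHZ}; your adjunction argument (using that $i_{e_A}$ is exact, preserves projectives, and is left adjoint to $S_{e_A}$ with $S_{e_A}\circ i_{e_A}=\id$, whence $\Ext^n_T(i_{e_A}(M),i_{e_A}(X))\cong\Ext^n_A(M,X)$) is a correct and transparent replacement. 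So your proof is a self-contained unpacking of the paper's, with no genuinely new ideas on either side.

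One minor point of phrasing: Lemma \ref{lem:2.7}(3) only requires four of the six functors to preserve the thick subcategories, not all six; but since you verify all six anyway in the forward direction this causes no trouble.
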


\begin{proof} Following Lemma \ref{lem:3.2} (1), we have the recollement (3.2) since $\pd M_B<\infty$.
 $(3)\Rightarrow(2)$ could be found in \cite[Theorem 3.2]{LL} and the equivalence of (1) and (2) follows directly from Lemma \ref{lem:2.7}.

$(1)\Rightarrow(3)$ Assume that we have the recollement (3.5), then $\D^b(i_{e_B})(B)\cong i_{e_B}(B)\in\K^b(\proj T)$. Since $i_{e_B}(B)\cong\left(
                                                                                                                                                           \begin{array}{c}
                                                                                                                                                             0 \\
                                                                                                                                                             B\\
                                                                                                                                                           \end{array}
                                                                                                                                                         \right)$ by Lemma \ref{lem:3.1}, it follows that $\pd_T\left(
                                                                                                                                                           \begin{array}{c}
                                                                                                                                                             0 \\
                                                                                                                                                             B\\
                                                                                                                                                           \end{array}
                                                                                                                                                         \right)<\infty$. Now consider the following exact sequence of $T$-modules:
$$0\to\left(
       \begin{array}{c}
         M \\
         0 \\
       \end{array}
     \right)\to\left(
       \begin{array}{c}
         M\\
         B \\
       \end{array}
     \right)\to\left(
       \begin{array}{c}
          0\\
         B \\
       \end{array}
     \right)\to0.\eqno{(ex1)}$$
Notice that $\left(
       \begin{array}{c}
         M\\
         B \\
       \end{array}
     \right)\in\proj T$ and $\pd_T\left(
       \begin{array}{c}
         0 \\
        B \\
       \end{array}
     \right)<\infty$, we have $\pd_T\left(
                                                                                                                                                                                              \begin{array}{c}
                                                                                                                                                                                                M \\
                                                                                                                                                                                                0 \\
                                                                                                                                                                                              \end{array}
                                                                                                                                                                                            \right)<\infty$.                                                                                                                                             
By \cite[Lemma 2.3]{LZHZ}, we get $\pd_A M<\infty$.
\end{proof}

\begin{rem}\label{rem:3.4} {\rm Assume that $\pd M_B<\infty$. Liu and Lu showed in \cite{LL} that if $\pd_A M<\infty$ then we have the recollement (3.6) of singularity categories. Whereas, by the equivalence of (2) and (3) in Proposition \ref{prop:3.3}, we know that ``$\pd_A M<\infty$'' is also a necessary condition for the existence of the recollement (3.6).
}\end{rem}

Similarly, we get the following

\begin{prop}\label{prop:3.5} Let $T=\left(
                                                                                 \begin{array}{cc}
                                                                                   A & M \\
                                                                                   0 & B \\
                                                                                 \end{array}
                                                                               \right)$ be a triangular matrix algebra with $_AM_B$ an $A$-$B$-bimodule. Assume that $\pd_A M<\infty$ and $M\in{^\bot A}$. Then the following statements are equivalent.
\begin{enumerate}
\item[(1)] We have the following recollement of perfect complexes
$$\xymatrix@=2.5cm{\K^b(\proj B)\ar[r]^{\D^b(i_{e_B})}& \K^b(\proj T)\ar@/^1pc/[l]^{\mathbb{R}\Hom_T(B,-)}\ar@/_1.5pc/[l]_{\D^b(S_{e_B})}\ar[r]^{\D^b(S_{e_A})} & \K^b(\proj A)\ar@/^1pc/[l]^{\mathbb{R}\Hom_A(e_AT,-)}\ar@/_1.5pc/[l]_{\D^b(i_{e_A})}},\eqno{(3.7)}$$ where these six functors are the restrictions of those in (3.3).
\item[(2)] We have the following recollement of singularity categories
$$\xymatrix@=2.5cm{\D_{sg}(B)\ar[r]^{\overline{\D^b(i_{e_B})}}& \D_{sg}(T)\ar@/^1pc/[l]^{\overline{\mathbb{R}\Hom_T(B,-)}}\ar@/_1.5pc/[l]_{\overline{\D^b(S_{e_B})}}\ar[r]^{\overline{\D^b(S_{e_A})}} & \D_{sg}(A)\ar@/^1pc/[l]^{\overline{\mathbb{R}\Hom_A(e_AT,-)}}\ar@/_1.5pc/[l]_{\overline{\D^b(i_{e_A})}}},\eqno{(3.8)}$$ where these six functors are induced by those in (3.3).
\item[(3)] $\pd_B\Hom_A(M,A)<\infty$.
\end{enumerate}
\end{prop}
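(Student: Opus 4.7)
My plan mirrors the proof of Proposition \ref{prop:3.3}. The derived recollement (3.3) is supplied by Lemma \ref{lem:3.2}(2) from the hypothesis $\pd_A M < \infty$, and applying Lemma \ref{lem:2.7} with the thick subcategories $\K^b(\proj A)$, $\K^b(\proj T)$, $\K^b(\proj B)$ inside $\D^b(\mod A)$, $\D^b(\mod T)$, $\D^b(\mod B)$ immediately gives $(1)\Leftrightarrow(2)$. Hence the real content is to establish $(1)\Rightarrow(3)$ and $(3)\Rightarrow(1)$.

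For $(1)\Rightarrow(3)$: if the restricted recollement (3.7) exists, then its right adjoint $\mathbb{R}\Hom_A(e_AT,-)$ sends the generator $A\in \K^b(\proj A)$ into $\K^b(\proj T)$. Using the decomposition $e_AT\cong A\oplus M$ as a left $A$-module together with the vanishing $\Ext^{\geq 1}_A(M,A)=0$ furnished by $M\in {^\bot A}$, one computes
$$\mathbb{R}\Hom_A(e_AT,A)\simeq A\oplus \Hom_A(M,A),$$
which by Lemma \ref{lem:3.1} is the stalk $T$-module $\left(\begin{array}{c} A \\ N \end{array}\right)_{\mathrm{ev}}$, where $N:=\Hom_A(M,A)$ and the structure map is evaluation. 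Consequently $\pd_T\left(\begin{array}{c} A \\ N \end{array}\right)<\infty$. The Schur functor $S_{e_B}$ is exact and carries projective $T$-modules to projective $B$-modules, so $\D^b(S_{e_B})$ maps $\K^b(\proj T)$ into $\K^b(\proj B)$; applying it to the stalk complex recovers $N$, whence $\pd_B N<\infty$.

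For $(3)\Rightarrow(1)$: I would verify condition (3) of Lemma \ref{lem:2.7} applied to the recollement (3.3), which requires only the four functors $i_*, i^*, j^*, j_*$ (namely $\D^b(i_{e_B})$, $\D^b(S_{e_B})$, $\D^b(S_{e_A})$, $\mathbb{R}\Hom_A(e_AT,-)$) to preserve the chosen $\K^b(\proj)$ subcategories. The Schur functors $\D^b(S_{e_A})$ and $\D^b(S_{e_B})$ preserve projectives on the nose. For $\D^b(i_{e_B})$, the exact sequence $0\to \left(\begin{array}{c} M \\ 0 \end{array}\right)\to Te_B\to \left(\begin{array}{c} 0 \\ B \end{array}\right)\to 0$ combined with $\pd_T i_{e_A}(M)=\pd_A M<\infty$ (since $i_{e_A}$ is exact and takes $A$-projectives to $T$-projectives) yields $\pd_T i_{e_B}(B)\leq \pd_A M+1<\infty$. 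The main obstacle, and the only point where condition (3) is used, is verifying that $\mathbb{R}\Hom_A(e_AT,A)=\left(\begin{array}{c} A \\ N \end{array}\right)\in \K^b(\proj T)$. I would use the short exact sequence of $T$-modules
$$0\to \left(\begin{array}{c} A \\ 0 \end{array}\right)\to \left(\begin{array}{c} A \\ N \end{array}\right)\to \left(\begin{array}{c} 0 \\ N \end{array}\right)\to 0,$$
whose first term $Te_A$ is projective, reducing the claim to $\pd_T i_{e_B}(N)<\infty$. Since $i_{e_B}$ is exact, applying it to a length-$\pd_B N$ projective $B$-resolution of $N$ produces a finite resolution of $i_{e_B}(N)$ whose terms are summands of powers of $i_{e_B}(B)$, each therefore of $T$-projective dimension at most $\pd_T i_{e_B}(B)\leq \pd_A M+1$; a standard dimension-shifting argument then gives $\pd_T i_{e_B}(N)\leq \pd_A M+1+\pd_B N<\infty$, which completes the verification.
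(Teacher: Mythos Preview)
Your proof is correct and follows essentially the same route as the paper's, reducing via Lemma \ref{lem:2.7} to checking that the four functors $\D^b(i_{e_B})$, $\D^b(S_{e_B})$, $\D^b(S_{e_A})$, $\mathbb{R}\Hom_A(e_AT,-)$ preserve perfect complexes and using the same two short exact sequences (your sequences are the paper's $(ex1)$ and $(ex2)$). One small correction: $S_{e_A}$ does not literally ``preserve projectives on the nose''---it sends the indecomposable projective $Te_B=\left(\begin{smallmatrix} M\\ B\end{smallmatrix}\right)$ to $M$, which need not be $A$-projective---but since $\pd_A M<\infty$ the derived functor $\D^b(S_{e_A})$ still sends $\K^b(\proj T)$ into $\K^b(\proj A)$, which is all you need (the paper makes this step explicit).
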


\begin{proof} Following Lemma \ref{lem:3.2} (2), we have the recollement (3.3) since $\pd_A M<\infty$. In view of Lemma \ref{lem:2.7}, it suffices to show $\pd_B\Hom_A(M,A)<\infty$ if and only if the four functors $\D^b(S_{e_B})$, $\D^b(i_{e_B})$, $\D^b(S_{e_A})$ and $\mathbb{R}\Hom_A(e_AT,-)$ preserve prefect complexes.

Assume these four functors preserve prefect complexes.
Then we have that $\mathbb{R}\Hom_A(e_AT,A)\in\K^b(\proj T)$.
Since $M\in{^\perp A}$ by assumption and $e_AT\cong A\oplus M$ as $A$-modules, it follows that $\Ext^n_A(e_AT,A)=0$ for any $n\geq1$.
Hence $\Hom_A(e_AT,A)\cong\mathbb{R}\Hom_A(e_AT,A)\in\K^b(\proj T)$ and then $\pd_T\Hom_A(e_AT,A)<\infty$. Notice that $\Hom_A(e_AT,A)\cong\left(
                                                                                                                                             \begin{array}{c}
                                                                                                                                               A \\
                                                                                                                                               \Hom_A(M,A) \\
                                                                                                                                             \end{array}
                                                                                                                                           \right)_{\widetilde{id}}$ by Lemma \ref{lem:3.1}, we consider the following exact sequence of $T$-modules:
$$0\to\left(
        \begin{array}{c}
          A \\
          0 \\
        \end{array}
      \right)\to\left(
                                                                                                                                             \begin{array}{c}
                                                                                                                                               A \\
                                                                                                                                               \Hom_A(M,A) \\
                                                                                                                                             \end{array}
                                                                                                                                           \right)_{\widetilde{id}}\to\left(
                                                                                                                                                                        \begin{array}{c}
                                                                                                                                                                          0 \\
                                                                                                                                                                          \Hom_A(M,A) \\
                                                                                                                                                                        \end{array}
                                                                                                                                                                      \right)\to0.\eqno{(ex2)}$$
As $\left(
        \begin{array}{c}
          A \\
          0 \\
        \end{array}
      \right)$ is projective, we infer that $\pd_T\left(
                                                                                                                                                                        \begin{array}{c}
                                                                                                                                                                          0 \\
                                                                                                                                                                          \Hom_A(M,A) \\
                                                                                                                                                                        \end{array}
                                                                                                                                                                      \right)<\infty$ since $\pd_T\left(
                                                                                                                                             \begin{array}{c}
                                                                                                                                               A \\
                                                                                                                                               \Hom_A(M,A) \\
                                                                                                                                             \end{array}
                                                                                                                                           \right)_{\widetilde{id}}<\infty$. Thus we get $\pd_B\Hom_A(M,A)<\infty$ from \cite[Lemma 2.3]{LZHZ}.

Conversely, Assume $\pd_B\Hom_A(M,A)<\infty$. Since $S_{e_B}(T)\cong B$, one has $\D^b(S_{e_B})(\K^b(\proj T))\subseteq\K^b(\proj B)$. By Lemma \ref{lem:3.1}, we have that $i_{e_B}(B)\cong\left(
                                                                                                                                                                     \begin{array}{c}
                                                                                                                                                                       0 \\
                                                                                                                                                                       B \\
                                                                                                                                                                     \end{array}
                                                                                                                                                                   \right)$ and $S_{e_A}(T)\cong S_{e_A}(\left(
                                                                                                                                                                                                           \begin{array}{c}
                                                                                                                                                                                                             A \\
                                                                                                                                                                                                             0 \\
                                                                                                                                                                                                           \end{array}
                                                                                                                                                                                                         \right)\oplus\left(
                                                                                                                                                                                                                        \begin{array}{c}
                                                                                                                                                                                                                         M \\
                                                                                                                                                                                                                          B \\
                                                                                                                                                                                                                        \end{array}
                                                                                                                                                                                                                      \right)
                                                                                                                                                                   )\cong A\oplus M$. We infer that $S_{e_A}(T)\in\K^b(\proj A)$ since $\pd_AM<\infty$, and hence $\D^b(S_{e_A})(\K^b(\proj T))\subseteq\K^b(\proj A)$.
Notice that $\left(
       \begin{array}{c}
         M\\
         B \\
       \end{array}
     \right)\in\proj T$ and $\pd_T\left(
       \begin{array}{c}
         M \\
         0 \\
       \end{array}
     \right)<\infty$ from \cite[Lemma 2.3]{LZHZ}, we infer $\pd_Ti_{e_B}(B)<\infty$ from the exactness of the sequence $(ex1)$.  This implies that $i_{e_B}(B)\in\K^b(\proj T)$ and therefore $\D^b(i_{e_B})(\K^b(\proj B))\subseteq\K^b(\proj T)$.

Finally,  we will show $\mathbb{R}\Hom_A(e_AT,-)$ preserves perfect complexes to complete the proof. Since $M\in{^\bot A}$ by assumption, it follows that $\Ext_A^n(e_AT,A)=0$ for any $n\geq1$. Thus $\mathbb{R}\Hom_A(e_AT,A)\cong\Hom_A(e_AT,A)\cong\left(
                                                                                                                                             \begin{array}{c}
                                                                                                                                               A \\
                                                                                                                                               \Hom_A(M,A) \\
                                                                                                                                             \end{array}
                                                                                                                                           \right)_{\widetilde{id}}$.
As $\pd_B\Hom_A(M,A)<\infty$, we infer $\pd_T\left(
                                                                                                                                                                        \begin{array}{c}
                                                                                                                                                                          0 \\
                                                                                                                                                                          \Hom_A(M,A) \\
                                                                                                                                                                        \end{array}
                                                                                                                                                                      \right)<\infty$ from \cite[Lemma 2.3]{LZHZ}.
                                                                                                                                         Notice that $\left(
                                                                                                                                                        \begin{array}{c}
                                                                                                                                                          A\\
                                                                                                                                                          0 \\
                                                                                                                                                        \end{array}
                                                                                                                                                      \right)\in\proj T$, we conclude $\pd_T\left(
                                                                                                                                             \begin{array}{c}
                                                                                                                                               A \\
                                                                                                                                               \Hom_A(M,A) \\
                                                                                                                                             \end{array}
                                                                                                                                           \right)_{\widetilde{id}}<\infty$ by the exact sequence $(ex2)$. Hence $\mathbb{R}\Hom_A(e_AT,A)\in\K^b(\proj T)$ and then we have $\mathbb{R}\Hom_A(e_AT,-)$ preserves perfect complexes as desired.
\end{proof}

%
%

\section{2-recollement of Gorenstein defect categories over triangular matrix algebras}\label{Gd}

In this section, we take $T=\left(
                                                                                 \begin{array}{cc}
                                                                                   A & M \\
                                                                                   0 & B \\
                                                                                 \end{array}
                                                                               \right)$ the triangular matrix algebra as that in Section \ref{Bir}. As a successor, we will study 2-recollement of the Gorenstein defect category $\D_{def}(T)$ relative to $\D_{def}(A)$ and $\D_{def}(B)$.

\vspace{3mm}
To prove Theorem \ref{thm:1.2}, we need some preparations. Firstly, we should know the concrete form of Gorenstein projective modules over $T$.

\begin{lem}(see \cite[Theorem 1.4]{ZP})\label{lem:4.1} Let $T=\left(
                                                                                 \begin{array}{cc}
                                                                                   A & M \\
                                                                                   0 & B \\
                                                                                 \end{array}
                                                                               \right)$ be a triangular matrix algebra with $_AM_B$ compatible.
Then $\left(
        \begin{array}{c}
          X \\
          Y \\
        \end{array}
      \right)_\phi\in\Gproj T$ if and only if $Y\in\Gproj B$ and $\phi:M\otimes_B Y\to X$ is an injective $A$-morphism with $\Coker\phi\in\Gproj A$.
\end{lem}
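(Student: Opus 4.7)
The plan is to exploit the explicit form of projective $T$-modules, each of which decomposes as $\left(\begin{array}{c} P \oplus M\otimes_B Q \\ Q \end{array}\right)$ with $P \in \proj A$, $Q \in \proj B$ and the canonical inclusion as structure map. Two short exact sequences attached to $\left(\begin{array}{c} X \\ Y \end{array}\right)_\phi$ will be used repeatedly: the row splitting
$$0 \to \left(\begin{array}{c} X \\ 0 \end{array}\right) \to \left(\begin{array}{c} X \\ Y \end{array}\right)_\phi \to \left(\begin{array}{c} 0 \\ Y \end{array}\right) \to 0,$$
and, when $\phi$ is injective,
$$0 \to \left(\begin{array}{c} M\otimes_B Y \\ Y \end{array}\right)_{\id} \to \left(\begin{array}{c} X \\ Y \end{array}\right)_\phi \to \left(\begin{array}{c} \Coker\phi \\ 0 \end{array}\right) \to 0.$$

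For the sufficiency direction, I would pick complete projective resolutions $P^\bullet$ of $\Coker\phi$ over $A$ and $Q^\bullet$ of $Y$ over $B$, and assemble them into a complex $\mathbb{X}^\bullet$ of projective $T$-modules with $\mathbb{X}^i = \left(\begin{array}{c} P^i \oplus M\otimes_B Q^i \\ Q^i \end{array}\right)$, the differentials being built from those of $P^\bullet$, from $\id \otimes d_{Q^\bullet}$ on the tensor summand, and from a single lift of $\phi$ in the zeroth spot. The first part of compatibility makes $M\otimes_B Q^\bullet$ acyclic, after which a five-lemma argument applied degreewise to the two short exact sequences above yields acyclicity of $\mathbb{X}^\bullet$ and identifies $Z^0(\mathbb{X}^\bullet)$ with $\left(\begin{array}{c} X \\ Y \end{array}\right)_\phi$. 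Total acyclicity is checked via the splitting $\Hom_T(-, T) \cong \Hom_T(-, \left(\begin{array}{c} A \\ 0 \end{array}\right)) \oplus \Hom_T(-, \left(\begin{array}{c} M \\ B \end{array}\right))$; the only nontrivial piece is acyclicity of $\Hom_A(P^\bullet, M)$, which follows from the second part of compatibility, $M \in (\Gproj A)^\perp$, together with the fact that every syzygy of $P^\bullet$ lies in $\Gproj A$.

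For the necessity direction, I would take a totally acyclic complex $\mathbb{X}^\bullet$ of projective $T$-modules with $Z^0(\mathbb{X}^\bullet) \cong \left(\begin{array}{c} X \\ Y \end{array}\right)_\phi$ and decompose each $\mathbb{X}^i$ as above. Applying the exact, projective-preserving functor $S_{e_B}$ yields an acyclic complex $Q^\bullet \in \K(\proj B)$ with $Z^0(Q^\bullet) = Y$, and total acyclicity of $Q^\bullet$ is inherited from $\mathbb{X}^\bullet$ via the identification $\Hom_B(Q^\bullet, B) \cong \Hom_T(\mathbb{X}^\bullet, \left(\begin{array}{c} M \\ B \end{array}\right))$, a direct summand of the acyclic complex $\Hom_T(\mathbb{X}^\bullet, T)$. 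Hence $Y \in \Gproj B$. Next, since $T$-linearity of the differentials forces the canonical $M\otimes_B Q^i$-summand to be preserved, there is a short exact sequence of complexes
$$0 \to \left(\begin{array}{c} M\otimes_B Q^\bullet \\ Q^\bullet \end{array}\right)_{\id} \to \mathbb{X}^\bullet \to \left(\begin{array}{c} P^\bullet \\ 0 \end{array}\right) \to 0,$$
in which the left-hand complex is acyclic (compatibility once more, applied to $Q^\bullet$). Injectivity at each degree forces injectivity on $Z^0$, which on the $A$-component reads exactly as injectivity of $\phi$; the long exact sequence in cohomology then gives $P^\bullet$ acyclic with $Z^0(P^\bullet) = \Coker\phi$. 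A dual computation using $\Hom_T(-, \left(\begin{array}{c} A \\ 0 \end{array}\right)) \cong \Hom_A(A\otimes_T-, A)$ secures total acyclicity of $P^\bullet$, so $\Coker\phi \in \Gproj A$.

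The main obstacle will be bookkeeping the differentials of $\mathbb{X}^\bullet$ in the sufficiency direction, so that the single datum $\phi$ at degree zero extends to a coherent system of $T$-linear differentials across all degrees, and so that the assembled complex actually has the correct zeroth syzygy. The two halves of the compatibility hypothesis on $_AM_B$ are tailored precisely to the two distinct obstructions encountered: $M \otimes_B -$ must preserve acyclicity of the relevant $B$-resolutions (both for building $\mathbb{X}^\bullet$ and for the cokernel analysis), while $M \in (\Gproj A)^\perp$ is exactly what is needed to annihilate the $\Ext$-obstructions to total acyclicity on the $A$-side.
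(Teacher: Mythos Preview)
The paper does not prove this lemma; it simply cites \cite[Theorem 1.4]{ZP}. So there is no in-paper argument to compare against, and your proposal is essentially a reconstruction of Zhang's proof. The overall architecture is right, but there is one genuine gap and one point of vagueness.

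\textbf{The gap (necessity direction).} Your claimed identification $\Hom_B(Q^\bullet,B)\cong\Hom_T(\mathbb{X}^\bullet,\left(\begin{smallmatrix}M\\B\end{smallmatrix}\right))$ is false. A direct computation on the projective $T$-module $\left(\begin{smallmatrix}P^i\oplus M\otimes_BQ^i\\Q^i\end{smallmatrix}\right)$ gives
\[
\Hom_T\!\Bigl(\mathbb{X}^i,\left(\begin{smallmatrix}M\\B\end{smallmatrix}\right)\Bigr)\;\cong\;\Hom_A(P^i,M)\oplus\Hom_B(Q^i,B),
\]
and your short exact sequence of complexes $0\to\left(\begin{smallmatrix}M\otimes_BQ^\bullet\\Q^\bullet\end{smallmatrix}\right)\to\mathbb{X}^\bullet\to\left(\begin{smallmatrix}P^\bullet\\0\end{smallmatrix}\right)\to0$ yields, after applying $\Hom_T(-,\left(\begin{smallmatrix}M\\B\end{smallmatrix}\right))$, only a short exact sequence
\[
0\to\Hom_A(P^\bullet,M)\to\Hom_T\!\Bigl(\mathbb{X}^\bullet,\left(\begin{smallmatrix}M\\B\end{smallmatrix}\right)\Bigr)\to\Hom_B(Q^\bullet,B)\to0.
\]
So acyclicity of $\Hom_B(Q^\bullet,B)$ requires acyclicity of $\Hom_A(P^\bullet,M)$, which in turn needs the syzygies of $P^\bullet$ to lie in $\Gproj A$ together with the second compatibility condition $M\in(\Gproj A)^\perp$. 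The fix is to reverse your order: first use the summand $\Hom_T(\mathbb{X}^\bullet,\left(\begin{smallmatrix}A\\0\end{smallmatrix}\right))\cong\Hom_A(P^\bullet,A)$ to get $P^\bullet$ totally acyclic (hence $\phi$ injective with $\Coker\phi\in\Gproj A$), then invoke $M\in(\Gproj A)^\perp$ to kill $\Hom_A(P^\bullet,M)$, and only then conclude $Y\in\Gproj B$.

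\textbf{The vagueness (sufficiency direction).} The phrase ``a single lift of $\phi$ in the zeroth spot'' does not suffice: to realise the extension $0\to\left(\begin{smallmatrix}M\otimes_BY\\Y\end{smallmatrix}\right)_{\id}\to\left(\begin{smallmatrix}X\\Y\end{smallmatrix}\right)_\phi\to\left(\begin{smallmatrix}\Coker\phi\\0\end{smallmatrix}\right)\to0$ at the level of totally acyclic complexes you need cross-terms $P^i\to M\otimes_BQ^{i+1}$ in every degree, constructed by an inductive Horseshoe/co-Horseshoe argument; the obstruction groups $\Ext^1_A(\Coker\phi',M\otimes_BQ^i)$ (for the successive syzygies $\Coker\phi'\in\Gproj A$) vanish precisely because $M\in(\Gproj A)^\perp$. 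Alternatively, and more cleanly, you can bypass the explicit construction entirely: show directly that $\left(\begin{smallmatrix}M\otimes_BY\\Y\end{smallmatrix}\right)_{\id}=Te_B\otimes_BY$ and $\left(\begin{smallmatrix}\Coker\phi\\0\end{smallmatrix}\right)$ are each in $\Gproj T$ (both compatibility hypotheses enter here, exactly as you indicate), and then use that $\Gproj T$ is closed under extensions.
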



We also need the following fact.

\begin{lem}\label{lem:4.3}(see \cite[Corollary 4.2]{LHZ}) Let $T=\left(
                                                                                 \begin{array}{cc}
                                                                                   A & M \\
                                                                                   0 & B \\
                                                                                 \end{array}
                                                                               \right)$ be a triangular matrix algebra with $_AM_B$ compatible. The following statements hold true.
\begin{enumerate}
\item[(1)] $\Gpd_T\left(
                                                                                            \begin{array}{c}
                                                                                              X \\
                                                                                              0 \\
                                                                                            \end{array}
                                                                                          \right)
                                                                               =\Gpd_AX$.
\item[(2)] Assume that $M$ is right Gorenstein singular.
                                                                                                                                                        Then $\Gpd_T\left(
                                                                                                                                                                      \begin{array}{c}
                                                                                                                                                                        0 \\
                                                                                                                                                                        Y\\
                                                                                                                                                                      \end{array}
                                                                                                                                                                    \right)
                                                                                                                                                        <\infty$ if and only if $\Gpd_BY<\infty$.
\end{enumerate}
\end{lem}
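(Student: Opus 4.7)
The plan for both parts is to analyze how $T$-projective and Gorenstein projective resolutions interact with the block form from Lemma \ref{lem:4.1}, using the exact functors in the 2-recollement of Lemma \ref{lem:3.1}.

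\textbf{Part (1).} The key observation is that $i_{e_A}\colon\mod A\to\mod T$ is exact and carries $\proj A$ into $\proj T$: indeed $i_{e_A}(P)=\left(\begin{smallmatrix}P\\0\end{smallmatrix}\right)_{0}$ has second component $0\in\proj B$, zero (vacuously injective) structure map, and cokernel $P\in\proj A$. Thus, applying $i_{e_A}$ to any projective resolution $P^\bullet\to X$ in $\mod A$ yields a projective resolution of $\left(\begin{smallmatrix}X\\0\end{smallmatrix}\right)$ in $\mod T$ whose $n$-th syzygy is $\left(\begin{smallmatrix}\Omega^n_A X\\0\end{smallmatrix}\right)$. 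By Lemma \ref{lem:4.1} (which uses compatibility), this module lies in $\Gproj T$ if and only if $\Omega^n_A X\in\Gproj A$, and the standard criterion $\Gpd_R N\le n\iff\Omega^n N\in\Gproj R$ then gives $\Gpd_T\left(\begin{smallmatrix}X\\0\end{smallmatrix}\right)=\Gpd_A X$.

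\textbf{Part (2).} The direction $\Rightarrow$ is straightforward: apply the exact Schur functor $S_{e_B}$ to a bounded Gorenstein projective resolution of $\left(\begin{smallmatrix}0\\Y\end{smallmatrix}\right)$ over $T$; since Lemma \ref{lem:4.1} forces the $B$-component of any module in $\Gproj T$ to lie in $\Gproj B$, the image is a bounded Gorenstein projective resolution of $Y$, so $\Gpd_B Y<\infty$ (this direction does not use right Gorenstein singularity). For the converse, the pivotal short exact sequence of $T$-modules is
\[
0\to\left(\begin{smallmatrix}M\otimes_B G\\0\end{smallmatrix}\right)\to\left(\begin{smallmatrix}M\otimes_B G\\G\end{smallmatrix}\right)_{\mathrm{id}}\to\left(\begin{smallmatrix}0\\G\end{smallmatrix}\right)\to 0
\]
for $G\in\Gproj B$. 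Its middle term is $Te_B\otimes_B G$, which lies in $\Gproj T$ by Lemma \ref{lem:4.1} (its cokernel is zero); its left term has $\Gpd_T=\Gpd_A(M\otimes_B G)<\infty$ by part (1) combined with the right Gorenstein singularity of $M$. Hence $\Gpd_T\left(\begin{smallmatrix}0\\G\end{smallmatrix}\right)<\infty$ for each $G\in\Gproj B$. Applying the exact functor $i_{e_B}$ to a finite Gorenstein projective resolution of $Y$ in $\mod B$ then produces a bounded exact sequence in $\mod T$ whose nonzero terms all have finite Gorenstein projective dimension, so Lemma \ref{lem:2.4} together with Remark \ref{rem:2.3} delivers $\Gpd_T\left(\begin{smallmatrix}0\\Y\end{smallmatrix}\right)<\infty$.

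\textbf{Main obstacle.} The real content is in (2)$\Leftarrow$. Unlike $i_{e_A}$, the functor $i_{e_B}$ does \emph{not} preserve Gorenstein projectivity, because $i_{e_B}(G)=\left(\begin{smallmatrix}0\\G\end{smallmatrix}\right)_{0}$ fails to lie in $\Gproj T$ as soon as $M\otimes_B G\neq 0$. The right Gorenstein singular hypothesis is exactly the correct quantitative substitute: it ensures this defect has finite $\Gpd_A$, and the displayed short exact sequence is the mechanism that converts ``finite $\Gpd_A(M\otimes_B G)$'' into ``finite $\Gpd_T$ of $\left(\begin{smallmatrix}0\\G\end{smallmatrix}\right)$'', thereby letting a Gorenstein projective resolution of $Y$ over $B$ be lifted to a finite Gorenstein projective resolution of $\left(\begin{smallmatrix}0\\Y\end{smallmatrix}\right)$ over $T$.
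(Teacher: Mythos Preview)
Your argument is correct. Note, however, that the paper does not supply its own proof of Lemma~\ref{lem:4.3}; the result is simply quoted from \cite[Corollary~4.2]{LHZ}, so there is no in-paper argument to compare against. What you have written is a clean, self-contained proof that uses only ingredients already present in this paper (Lemmas~\ref{lem:3.1} and~\ref{lem:4.1}, Remark~\ref{rem:2.3}, Lemma~\ref{lem:2.4}, and the standard syzygy criterion $\Gpd_R N\le n\Leftrightarrow\Omega^n N\in\Gproj R$). Your identification of the obstacle in (2)$\Leftarrow$ is exactly right: $i_{e_B}$ fails to preserve Gorenstein projectives, and the short exact sequence $0\to\left(\begin{smallmatrix}M\otimes_B G\\0\end{smallmatrix}\right)\to Te_B\otimes_B G\to\left(\begin{smallmatrix}0\\G\end{smallmatrix}\right)\to 0$ together with right Gorenstein singularity is the correct repair. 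The final step invoking Lemma~\ref{lem:2.4} is also fine, though one could equally well just iterate the two-out-of-three property for finite $\Gpd$ along the resolution.
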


Note that the 2-recollement (1.2) in Theorem \ref{thm:1.2} is consisted of two recollements.  We will give equivalent characterizations for the existence of each one.

\begin{prop}\label{prop:4.4} (compare \cite[Theorem 1.2]{LZHZ} and \cite[Theorem 3.12]{L}) Let $T=\left(
                                                                                 \begin{array}{cc}
                                                                                   A & M \\
                                                                                   0 & B \\
                                                                                 \end{array}
                                                                               \right)$ be a triangular matrix algebra with $_AM_B$ compatible. Assume that $\pd M_B<\infty$. Then the following statements are equivalent.
\begin{enumerate}
\item[(1)]  We have the following recollement of Gorenstein perfect complexes
$$\xymatrix@=2.5cm{\Gperf(A)\ar[r]^{\D^b(i_{e_A})}& \Gperf(T)\ar@/^1pc/[l]^{\D^b(S_{e_A})}\ar@/_1.5pc/[l]_{A\otimes^\mathbb{L}_T-}\ar[r]^{\D^b(S_{e_B})} & \Gperf(B) \ar@/^1pc/[l]^{\D^b(i_{e_B})}\ar@/_1.5pc/[l]_{Te_B\otimes^\mathbb{L}_B-}},\eqno{(4.1)}$$ where these six functors are the restrictions of those in (3.2).

\item[(2)] We have the following recollement of Gorenstein defect categories
$$\xymatrix@=2.5cm{\D_{def}(A)\ar[r]^{\widetilde{\D^b(i_{e_A})}}& \D_{def}(T)\ar@/^1pc/[l]^{\widetilde{\D^b(S_{e_A})}}\ar@/_1.5pc/[l]_{\widetilde{A\otimes^\mathbb{L}_T-}}\ar[r]^{\widetilde{\D^b(S_{e_B})}} & \D_{def}( B)\ar@/^1pc/[l]^{\widetilde{\D^b(i_{e_B})}}\ar@/_1.5pc/[l]_{\widetilde{Te_B\otimes^\mathbb{L}_B-}}},\eqno{(4.2)}$$ where these six functors are induced by those in (3.2).

\item[(3)] $M$ is right Gorenstein singular.
\end{enumerate}
\end{prop}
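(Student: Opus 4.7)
The plan is to combine Lemma \ref{lem:2.5} with Lemma \ref{lem:2.7} to reduce everything to verifying that the four functors $A\otimes^{\mathbb{L}}_T-$, $\D^b(i_{e_A})$, $\D^b(S_{e_B})$ and $\D^b(i_{e_B})$ from (3.2) (which exists by Lemma \ref{lem:3.2}(1) under the hypothesis $\pd M_B<\infty$) preserve Gorenstein perfectness, and then to prove that this preservation holds if and only if $M$ is right Gorenstein singular. Lemma \ref{lem:2.5} identifies $\D^b(\mod R)/\Gperf(R)$ with $\D_{def}(R)$ for every algebra $R$, so Lemma \ref{lem:2.7} applied to the thick subcategories $\Gperf(A)$, $\Gperf(T)$, $\Gperf(B)$ inside (3.2) gives the equivalence $(1)\Leftrightarrow(2)$, and shows both are equivalent to preservation of Gperf by those four functors.

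For the implication $(1)\Rightarrow(3)$, I would mimic the short exact sequence argument used in the proof of Proposition \ref{prop:3.3}. Assuming (4.1), for any $Y\in\Gproj B$ one has $\D^b(i_{e_B})(Y)\cong\bigl(\begin{smallmatrix}0\\Y\end{smallmatrix}\bigr)\in\Gperf(T)$. I would then apply thickness of $\Gperf(T)\subseteq\D^b(\mod T)$ to the short exact sequence
$$0\to\left(\begin{array}{c}M\otimes_B Y\\ 0\end{array}\right)\to\left(\begin{array}{c}M\otimes_B Y\\ Y\end{array}\right)_{\mathrm{id}}\to\left(\begin{array}{c}0\\ Y\end{array}\right)\to 0,$$
whose middle term lies in $\Gproj T$ by Lemma \ref{lem:4.1}, in order to deduce that the left term is in $\Gperf(T)$. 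Lemma \ref{lem:4.3}(1) then yields $\Gpd_A(M\otimes_B Y)<\infty$, and since $Y\in\Gproj B$ was arbitrary, $M$ is right Gorenstein singular.

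For $(3)\Rightarrow(1)$, assuming $M$ is right Gorenstein singular, I would verify preservation of Gperf for each of the four functors separately. That $\D^b(i_{e_A})$ preserves Gperf is immediate from Lemma \ref{lem:4.3}(1); that $\D^b(i_{e_B})$ does follows from Lemma \ref{lem:4.3}(2) (this is the sole place where right Gorenstein singularity enters); and that $\D^b(S_{e_B})$ does is a consequence of Lemma \ref{lem:4.1} together with exactness of $S_{e_B}$ on module categories.

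The main obstacle will be preservation by the derived functor $A\otimes^{\mathbb{L}}_T-$. To handle it, I would take $(X,Y)_\phi\in\Gproj T$ together with a totally acyclic complex $P^\bullet$ of projective $T$-modules satisfying $(X,Y)\cong Z^0(P^\bullet)$, and use the explicit description of projective $T$-modules to write $P^i\cong(M\otimes_B Q^i\oplus R^i,\,Q^i)$ with $Q^i\in\proj B$ and $R^i\in\proj A$. Writing the first coordinate of $P^\bullet$ as $X^\bullet$, one has $A\otimes_T P^\bullet\cong R^\bullet$, and the short exact sequence of complexes $0\to M\otimes_B Q^\bullet\to X^\bullet\to R^\bullet\to 0$ combined with the compatibility of ${}_AM_B$ (which ensures $M\otimes_B Q^\bullet$ is acyclic) forces $R^\bullet$ to be acyclic. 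Since $(A,0)$ is a projective direct summand of $T$ and $\Hom_T(P^\bullet,(A,0))\cong\Hom_A(R^\bullet,A)$, total acyclicity of $P^\bullet$ transfers to total acyclicity of $R^\bullet$; hence the higher Tor-terms all vanish and $A\otimes^{\mathbb{L}}_T(X,Y)\cong\Coker\phi$, which lies in $\Gproj A$ by Lemma \ref{lem:4.1} applied again to $(X,Y)\in\Gproj T$. This completes the plan.
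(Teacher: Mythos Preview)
Your proof is correct and matches the paper's argument closely: the equivalence $(1)\Leftrightarrow(2)$ via Lemmas~\ref{lem:2.5} and~\ref{lem:2.7}, and the implication $(1)\Rightarrow(3)$ via the same short exact sequence together with Lemmas~\ref{lem:4.1} and~\ref{lem:4.3}, are exactly what the paper does. The only divergence is in $(3)\Rightarrow(1)$: the paper simply cites \cite[Theorem~1.2]{LZHZ}, whereas you supply a self-contained verification that the four relevant functors preserve Gorenstein perfect complexes. Your treatment of $A\otimes^{\mathbb{L}}_T-$---building the degreewise-split short exact sequence $0\to M\otimes_B Q^\bullet\to X^\bullet\to R^\bullet\to 0$ from a totally acyclic complex and using compatibility of $M$ to force acyclicity of $R^\bullet$, hence vanishing of higher Tor on Gorenstein projectives---is a clean direct argument that makes explicit what lies behind that citation. (Your total-acyclicity step for $R^\bullet$ is in fact redundant once you invoke Lemma~\ref{lem:4.1} to obtain $\Coker\phi\in\Gproj A$, but it does no harm.)
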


\begin{proof} Following Lemma \ref{lem:3.2} (1), we have the recollement (3.2) since $\pd M_B<\infty$. The equivalence of (1) and (2) follows from Lemmas \ref{lem:2.5} and \ref{lem:2.7}, and $(3)\Rightarrow(2)$ could be found in \cite[Theorem 1.2]{LZHZ}.

$(1)\Rightarrow(3)$ Assume that we have the recollement (4.1). Take any $Y\in\Gproj B$, we obtain $\D^b(i_{e_B})(Y)\cong i_{e_B}(Y)$ is Gorenstein perfect. Notice that $i_{e_B}(Y)\cong\left(
                                                                                                                                                                                     \begin{array}{c}
                                                                                                                                                                                       0 \\
                                                                                                                                                                                       Y \\
                                                                                                                                                                                     \end{array}
                                                                                                                                                                                   \right)$,
it follows that $\Gpd_T\left(
                                                                                                                                                                                     \begin{array}{c}
                                                                                                                                                                                       0 \\
                                                                                                                                                                                       Y \\
                                                                                                                                                                                     \end{array}
                                                                                                                                                                                   \right)<\infty$.
Consider the following exact sequence of $T$-modules:
$$0\to\left(
       \begin{array}{c}
          M\otimes_BY \\
         0 \\
       \end{array}
     \right)\to\left(
       \begin{array}{c}
         M\otimes_BY\\
         Y \\
       \end{array}
     \right)\to\left(
       \begin{array}{c}
          0\\
         Y \\
       \end{array}
     \right)\to0.$$
Since $\left(
       \begin{array}{c}
         M\otimes_BY\\
         Y \\
       \end{array}
     \right)\in\Gproj T$ by Lemma \ref{lem:4.1} and $\Gpd_T\left(
                                                                                                                                                                                     \begin{array}{c}
                                                                                                                                                                                       0 \\
                                                                                                                                                                                       Y \\
                                                                                                                                                                                     \end{array}
                                                                                                                                                                                   \right)<\infty$, we get $\Gpd_T\left(
       \begin{array}{c}
          M\otimes_BY \\
         0 \\
       \end{array}
     \right)<\infty$. Hence we conclude $\Gpd_AM\otimes_BY<\infty$ from Lemma \ref{lem:4.3}, this means $M$ is right Gorenstein singular.
\end{proof}

Viewing the stable category of Gorenstein projective modules as a triangulated subcategory of the singularity category, we get the following

\begin{cor}\label{cor:4.5} Suppose that $_AM_B$ has finite projective dimension both as a left $A$- and right $B$-module. Then we have the following recollement of stable categories of Gorenstein projective modules
$$\xymatrix{\underline{\Gproj A}\ar[r]& \underline{\Gproj T}\ar@/^1pc/[l]\ar@/_1pc/[l]\ar[r] & \underline{\Gproj B}\ar@/^1pc/[l]\ar@/_1pc/[l]}\eqno{(4.3)}$$
such that all these functors are the restrictions of those in (3.6) if and only if $M$ is right Gorenstein singular.
\end{cor}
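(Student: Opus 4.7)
The plan is to reduce this corollary to Propositions \ref{prop:3.3} and \ref{prop:4.4} via the identification of $\underline{\Gproj R}$ as a thick subcategory of $\D_{sg}(R)$ (Lemma \ref{lem:2.5}) together with the restriction criterion of Lemma \ref{lem:2.7}. Under the standing hypotheses $\pd_AM<\infty$ and $\pd M_B<\infty$, Proposition \ref{prop:3.3} already supplies the recollement (3.5) of perfect complexes and, by quotienting (3.2) by perfect complexes, the recollement (3.6) of singularity categories. So the only remaining issue is to characterize when (3.6) restricts to the smaller triangulated subcategories $\underline{\Gproj A}$, $\underline{\Gproj T}$, $\underline{\Gproj B}$ of the respective singularity categories (viewed there via Buchweitz's fully faithful functor $F$).

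Next, I would apply Lemma \ref{lem:2.7} to the recollement (3.6), taking the thick subcategories to be the images of these Buchweitz embeddings. This yields the equivalence: (3.6) restricts to the desired recollement (4.3) (with functors the restrictions of those in (3.6)) if and only if each of the six functors in (3.6) sends $\underline{\Gproj}$ to $\underline{\Gproj}$. Using Lemma \ref{lem:2.5}, I would replace each $\underline{\Gproj R}\subseteq \D_{sg}(R)$ by the triangle-equivalent $\Gperf(R)/\K^b(\proj R)$. Because Proposition \ref{prop:3.3} already tells us that the functors in (3.2) preserve $\K^b(\proj)$, the induced functors on the singularity categories preserve $\Gperf(-)/\K^b(\proj -)$ if and only if the functors in (3.2) themselves preserve $\Gperf$. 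By Proposition \ref{prop:4.4}, this last condition is equivalent to $M$ being right Gorenstein singular, which finishes the proof.

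The delicate step will be the transfer between bounded derived categories and singularity categories in the second paragraph, i.e.\ justifying that preservation of $\Gperf/\K^b(\proj)$ inside $\D_{sg}$ is exactly preservation of $\Gperf$ inside $\D^b$ by the corresponding functors in (3.2). The argument rests on the commutative diagram of Lemma \ref{lem:2.5} applied to each of $A$, $B$, $T$ and on the fact that the six functors in (3.2) send perfect complexes to perfect complexes (already a consequence of Proposition \ref{prop:3.3} under our hypotheses), so that lifting and descent between (3.2) and (3.6) are compatible with the two thick subcategories in play. Once this compatibility is recorded, the chain of equivalences (3.6) restricts to (4.3) $\Longleftrightarrow$ the functors in (3.2) preserve $\Gperf$ $\Longleftrightarrow$ $M$ is right Gorenstein singular is immediate.
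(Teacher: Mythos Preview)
Your proposal is correct and follows essentially the same route as the paper: both use Proposition~\ref{prop:3.3} to obtain the recollement (3.6), then Lemma~\ref{lem:2.7} together with the identification of Lemma~\ref{lem:2.5} to pass between the restriction (4.3) and the induced defect/Gorenstein-perfect recollement, and finally Proposition~\ref{prop:4.4} to characterize the latter by $M$ being right Gorenstein singular. The only point you should make explicit is that the hypotheses $\pd_AM<\infty$ and $\pd M_B<\infty$ force $_AM_B$ to be compatible, since this is a standing assumption in Proposition~\ref{prop:4.4}.
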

\begin{proof} Since $_AM_B$ has finite projective dimension both as a left $A$- and right $B$-module, it is not hard to see $_AM_B$ is compatible. By Proposition \ref{prop:3.3}, we get the recollement (3.6).

For the ``if'' part, assume $M$ is right Gorenstein singular. From Proposition \ref{prop:4.4}, we get the recollement (4.2), which is also induced by the recollement (3.6) since their functors are initially from (3.2). Therefore, we have the recollement (4.3) from Lemma \ref{lem:2.7}.

Conversely, assume that we have the recollement (4.3).
 From Lemma \ref{lem:2.7}, we have the recollement (4.2). Then we infer that $M$ is right Gorenstein singular from Proposition \ref{prop:4.4}.
\end{proof}

\begin{rem} {\rm The recollement (4.3) of stable categories of Gorenstein projective modules has been considered by Zhang \cite{ZP}, where he proves that if $T$ is Gorenstein and $_AM$ is projective then we have the recollement (4.3) (see \cite[Theorem 3.5]{ZP}). Whereas, if $T$ is Gorenstein and $_AM$ is projective, it is not hard to see (combine \cite[Theorem 2.2]{ZP}) all the conditions in Corollary \ref{cor:4.5} are satisfied and $M$ is right Gorenstein singular. Therefore, our result generalizes Zhang's to a more general case. Besides, our proofs are quite different.
}\end{rem}

\begin{exa} {\rm Let $k$ be a field and $Q$ the following quiver:
$$\xymatrix{1\ar@/^/[r]^\alpha &2\ar@/^/[l]^{\alpha'}&\\
3\ar[u]^{\gamma} \ar@/^/[r]^{\beta}&4\ar[u]_{\delta}\ar[r]^{\theta}\ar@/^/[l]^{\beta'} &5.
}$$
Consider the $k$-algebra $T=kQ/I$, where $I$ is generated by $\alpha'\alpha$, $\alpha\alpha'$, $\beta'\beta$, $\beta\beta'$, $\theta\beta$, $\alpha\gamma-\delta\beta$ and $\alpha'\delta-\gamma\beta'$. Let $e_i$ be the idempotent corresponding to the vertex $i$ and put $e=e_1+e_2$.
Denote by $A=eTe$ and $B=(1-e)T(1-e)$. It follows that $T=\left(
                                                                                 \begin{array}{cc}
                                                                                   A & M \\
                                                                                   0 & B \\
                                                                                 \end{array}
                                                                               \right)$ with $M=eT(1-e)$. It is easy to check $A$ is self-injective, then $\D_{sg}(A)\simeq\underline{\Gproj A}\simeq\underline{\mod A}$ and hence $\D_{def}(A)$ vanishes.
Since $B$ is of radical square zero but not self-injective, we infer from \cite{C2'} that $B$ is CM-free (that is $\Gproj B=\proj B$). Hence we obtain $\underline{\Gproj B}$ vanishes and $\D_{def}(B)=\D_{sg}(B)$. Notice that $_AM$ and $M_B$ are projective, and $M$ is right Gorenstein singular since $A$ is self-injective.
Following Propositions \ref{prop:3.3}, \ref{prop:4.4} and Corollary \ref{cor:4.5},
we get the following recollement of singularity categories
$$\xymatrix{\D_{sg}(A)\ar[r]& \D_{sg}(T)\ar@/^1pc/[l]\ar@/_1pc/[l]\ar[r] & \D_{sg}(B)\ar@/^1pc/[l]\ar@/_1pc/[l]}$$
and triangle-equivalences $\D_{def}(T)\simeq\D_{def}(B)=\D_{sg}(B)$ and $\underline{\Gproj T}\simeq\underline{\Gproj A}\simeq\D_{sg}(A)\simeq\underline{\mod A}$.
}\end{exa}

Let $R$ be an Artin algebra and $X^\bullet$ a complex of $R$-modules. The {\it length} $l(X^\bullet)$ of $X^\bullet$ is defined to be the cardinal of the set $\{X^i\neq0|i\in\mathbb{Z}\}$. Let $n\in\mathbb{Z}$, denote by $X^\bullet_{\geqslant n}$ the complex with the $i$th component equal to $X^i$ whenever $i\geqslant n$ and to 0 elsewhere.

\begin{prop}\label{prop:4.7} Let $T=\left(
                                                                                 \begin{array}{cc}
                                                                                   A & M \\
                                                                                   0 & B \\
                                                                                 \end{array}
                                                                               \right)$ be a triangular matrix algebra with $_AM_B$ compatible. Assume that $\pd_A M<\infty$ and $M\in{^\bot\Gproj A}$. Then the following statements are equivalent.
\begin{enumerate}
\item[(1)] We have the following recollement of Gorenstein perfect complexes
$$\xymatrix@=2.5cm{\Gperf(B)\ar[r]^{\D^b(i_{e_B})}& \Gperf(T)\ar@/^1pc/[l]^{\mathbb{R}\Hom_T(B,-)}\ar@/_1.5pc/[l]_{\D^b(S_{e_B})}\ar[r]^{\D^b(S_{e_A})} & \Gperf(A) \ar@/^1pc/[l]^{\mathbb{R}\Hom_A(e_AT,-)}\ar@/_1.5pc/[l]_{\D^b(i_{e_A})}},\eqno{(4.4)}$$ where these six functors are the restrictions of those in (3.3).

\item[(2)] We have the following recollement of Gorenstein defect categories
$$\xymatrix@=2.5cm{\D_{def}(B)\ar[r]^{\widetilde{\D^b(i_{e_B})}}& \D_{def}(T)\ar@/^1pc/[l]^{\widetilde{\mathbb{R}\Hom_T(B,-)}}\ar@/_1.5pc/[l]_{\widetilde{\D^b(S_{e_B})}}\ar[r]^{\widetilde{\D^b(S_{e_A})}} & \D_{def}( A)\ar@/^1pc/[l]^{\widetilde{\mathbb{R}\Hom_A(e_AT,-)}}\ar@/_1.5pc/[l]_{\widetilde{\D^b(i_{e_A})}}},\eqno{(4.5)}$$ where these six functors are induced by those in (3.3).

\item[(3)] $M$ is Gorenstein singular.
\end{enumerate}
\end{prop}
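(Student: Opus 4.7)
The strategy parallels Proposition \ref{prop:4.4}: since $\pd_A M < \infty$, Lemma \ref{lem:3.2}(2) supplies the recollement (3.3), and the equivalence $(1) \Leftrightarrow (2)$ follows at once from Lemma \ref{lem:2.7} applied to the thick subcategories $\Gperf(A) \subseteq \D^b(\mod A)$, $\Gperf(T) \subseteq \D^b(\mod T)$ and $\Gperf(B) \subseteq \D^b(\mod B)$, combined with the identification $\D_{def}(R) \simeq \D^b(\mod R)/\Gperf(R)$ from Lemma \ref{lem:2.5}. It therefore suffices to prove $(1) \Leftrightarrow (3)$.

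For $(1) \Rightarrow (3)$, I would first extract right Gorenstein singularity of $M$ from the preservation property of $\D^b(i_{e_B})$, exactly as in the proof of Proposition \ref{prop:4.4}: for $Y \in \Gproj B$, Lemma \ref{lem:3.1} identifies $i_{e_B}(Y)$ with $\left(\begin{array}{c} 0 \\ Y \end{array}\right)$, which must be Gorenstein perfect; comparing with the Gorenstein projective $T$-module $\left(\begin{array}{c} M \otimes_B Y \\ Y \end{array}\right)$ (Lemma \ref{lem:4.1}) via the obvious short exact sequence and invoking Lemma \ref{lem:4.3}(1) yields $\Gpd_A(M \otimes_B Y) < \infty$. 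To extract left singularity, I apply $\mathbb{R}\Hom_A(e_AT, -)$ to $F \in \Gproj A$: the assumption $M \in {^\bot \Gproj A}$ together with $e_AT \cong A \oplus M$ makes the derived Hom collapse to $\left(\begin{array}{c} F \\ \Hom_A(M, F) \end{array}\right)_{\widetilde{id}}$, which is Gorenstein perfect by hypothesis. The short exact sequence splitting off $\left(\begin{array}{c} F \\ 0 \end{array}\right)$, combined with Lemma \ref{lem:4.3}(2) (now applicable because right singularity is already in hand), then forces $\Gpd_B \Hom_A(M, F) < \infty$.

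For $(3) \Rightarrow (1)$, I would first observe that Gorenstein singularity of $M$ entails right Gorenstein singularity, so by Proposition \ref{prop:4.4} the recollement (3.2) restricts to $\Gperf$. The four functors $\D^b(i_{e_A})$, $\D^b(S_{e_A})$, $\D^b(i_{e_B})$, $\D^b(S_{e_B})$ common to (3.2) and (3.3) thus preserve Gorenstein perfect complexes. For $\mathbb{R}\Hom_A(e_AT, -)$ acting on $F \in \Gproj A$, the computation above reduces the image to $\left(\begin{array}{c} F \\ \Hom_A(M, F) \end{array}\right)_{\widetilde{id}}$, whose $\Gpd_T$ is finite by combining Lemmas \ref{lem:4.3}(1) and \ref{lem:4.3}(2) with the two singularity properties; extension to all of $\Gperf(A)$ then follows from Lemma \ref{lem:2.4}.

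The main obstacle is proving that $\mathbb{R}\Hom_T(B, -)$ sends $\Gperf(T)$ into $\Gperf(B)$. For a Gorenstein projective $T$-module $G = \left(\begin{array}{c} X \\ Y \end{array}\right)_\phi$, I plan to construct a distinguished triangle $\mathbb{R}\Hom_T(B, G) \to Y \to \mathbb{R}\Hom_A(M, X)$ in $\D^b(\mod B)$ by splicing a finite projective $A$-resolution of $M$ (available since $\pd_A M < \infty$) with the short exact sequence $0 \to \left(\begin{array}{c} M \\ 0 \end{array}\right) \to Te_B \to \left(\begin{array}{c} 0 \\ B \end{array}\right) \to 0$ into a finite projective $T$-resolution of $_T B$. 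Using the exact sequence $0 \to M \otimes_B Y \to X \to \Coker \phi \to 0$ from Lemma \ref{lem:4.1}, the $\Coker\phi$-piece of $\mathbb{R}\Hom_A(M, X)$ collapses by $M \in {^\bot \Gproj A}$ to $\Hom_A(M, \Coker\phi)$, which has finite $\Gpd_B$ by left singularity; for the $M \otimes_B Y$-piece, right singularity supplies a finite Gorenstein projective $A$-resolution of $M \otimes_B Y$ on which the same $\Ext$-vanishing lets $\Hom_A(M,-)$ be applied termwise to land in $B$-modules of finite $\Gpd_B$ via left singularity. Lemma \ref{lem:2.4} then places $\mathbb{R}\Hom_T(B, G)$ in $\Gperf(B)$, and the extension to all of $\Gperf(T)$ follows again from Lemma \ref{lem:2.4}.
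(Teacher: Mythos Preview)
Your $(1)\Leftrightarrow(2)$ reduction and your $(1)\Rightarrow(3)$ argument are correct; the paper extracts right Gorenstein singularity by applying $\D^b(S_{e_A})$ to the Gorenstein projective module $\left(\begin{smallmatrix} M\otimes_B Y \\ Y \end{smallmatrix}\right)$ rather than by using $\D^b(i_{e_B})$, but your route via the short exact sequence and Lemma~\ref{lem:4.3}(1) works equally well.

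There are, however, two problems in $(3)\Rightarrow(1)$. The first is a genuine gap: you invoke Proposition~\ref{prop:4.4} to obtain preservation of Gorenstein perfect complexes by the four ``common'' functors, but Proposition~\ref{prop:4.4} carries the standing hypothesis $\pd M_B<\infty$, which is \emph{not} assumed in Proposition~\ref{prop:4.7}. Without that hypothesis the recollement~(3.2) does not even exist (Lemma~\ref{lem:3.2}(1)), so there is nothing to restrict. The fix is to verify directly that $\D^b(S_{e_B})$, $\D^b(i_{e_B})$ and $\D^b(S_{e_A})$ send Gorenstein perfect complexes to Gorenstein perfect complexes; this is routine from Lemmas~\ref{lem:4.1}, \ref{lem:4.3} and~\ref{lem:2.4} and is exactly what the paper does.

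The second problem is that your ``main obstacle'' is illusory. Look again at Lemma~\ref{lem:2.7}(3): the recollement restricts as soon as $i^*$, $i_*$, $j^*$ and $j_*$ preserve the thick subcategories. For the recollement~(3.3) these four functors are $\D^b(S_{e_B})$, $\D^b(i_{e_B})$, $\D^b(S_{e_A})$ and $\mathbb{R}\Hom_A(e_AT,-)$; the remaining two, namely $i^!=\mathbb{R}\Hom_T(B,-)$ and $j_!=\D^b(i_{e_A})$, are then handled automatically. The paper uses precisely this reduction and never analyses $\mathbb{R}\Hom_T(B,-)$ directly. Your triangle argument for $\mathbb{R}\Hom_T(B,-)$ can be made rigorous, but it is entirely avoidable once you read Lemma~\ref{lem:2.7}(3) correctly.
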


\begin{proof} Following Lemma \ref{lem:3.2} (2), we have the recollement (3.3) since $\pd_A M<\infty$. In view of Lemmas \ref{lem:2.5} and \ref{lem:2.7}, it suffices to show $M$ is Gorenstein singular if and only if the four functors $\D^b(S_{e_B})$, $\D^b(i_{e_B})$, $\D^b(S_{e_A})$ and $\mathbb{R}\Hom_A(e_AT,-)$ preserve Gorenstein perfect complexes.

Assume these four functors preserve Gorenstein perfect complexes. Take any $Y\in\Gproj B$, from Lemma \ref{lem:4.1} we know $\left(
                                                                                                                             \begin{array}{c}
                                                                                                                               M\otimes_BY \\
                                                                                                                               Y \\
                                                                                                                             \end{array}
                                                                                                                           \right)\in\Gproj T$ and then it is Gorenstein perfect. It follows that $$\D^b(S_{e_A})(\left(
                                                                                                                             \begin{array}{c}
                                                                                                                               M\otimes_BY \\
                                                                                                                               Y \\
                                                                                                                             \end{array}
                                                                                                                           \right))\cong S_{e_A}(\left(
                                                                                                                             \begin{array}{c}
                                                                                                                               M\otimes_BY \\
                                                                                                                               Y \\
                                                                                                                             \end{array}
                                                                                                                           \right))\cong M\otimes_BY\in\Gperf(A),$$
this implies $\Gpd_AM\otimes_BY<\infty$. Hence $M$ is right Gorenstein singular. Notice that $e_AT\cong A\oplus M$, we infer $e_AT\in{^\bot\Gproj A}$ since
$M\in{^\bot\Gproj A}$.
For any $X\in\Gproj A$, it follows that $$\mathbb{R}\Hom_A(e_AT,X)\cong\Hom_A(e_AT,X)\cong\left(
                                                                                            \begin{array}{c}
                                                                                              X \\
                                                                                              \Hom_A(M,X) \\
                                                                                            \end{array}
                                                                                          \right)_{\widetilde{id}}.$$
Since $\mathbb{R}\Hom_A(e_AT,-)$ preserves Gorenstein perfect complexes, we get $\Gpd_T\left(
                                                                                            \begin{array}{c}
                                                                                              X \\
                                                                                              \Hom_A(M,X) \\
                                                                                            \end{array}
                                                                                          \right)_{\widetilde{id}}<\infty$. Consider the following exact sequence of $T$-modules:
                                                                                          $$0\to\left(
                                                                                            \begin{array}{c}
                                                                                              X \\
                                                                                              0 \\
                                                                                            \end{array}
                                                                                          \right)\to\left(
                                                                                            \begin{array}{c}
                                                                                              X \\
                                                                                              \Hom_A(M,X) \\
                                                                                            \end{array}
                                                                                          \right)_{\widetilde{id}}\to\left(
                                                                                            \begin{array}{c}
                                                                                              0 \\
                                                                                              \Hom_A(M,X) \\
                                                                                            \end{array}
                                                                                          \right)\to0.\eqno{(ex3)}$$
                                                                                         Notice that $\left(
                                                                                            \begin{array}{c}
                                                                                              X \\
                                                                                              0 \\
                                                                                            \end{array}
                                                                                          \right)\in\Gproj T$ by Lemma \ref{lem:4.1}, one has $\Gpd_T\left(
                                                                                            \begin{array}{c}
                                                                                              0 \\
                                                                                              \Hom_A(M,X) \\
                                                                                            \end{array}
                                                                                          \right)<\infty$.
By Lemma \ref{lem:4.3} (2), we obtain $\Gpd_B\Hom_A(M,X)<\infty$ and then $M$ is left Gorenstein singular.  To sum up, we get that $M$ is Gorenstein singular.

Conversely, assume that $M$ is Gorenstein singular. Since $S_{e_B}$ preserves Gorenstein projective modules by Lemma \ref{lem:3.1}, it is easy to check that $\D^b(S_{e_B})$ preserves Gorenstein perfect complexes.
For any $Y\in\Gproj B$, by Lemma \ref{lem:3.1}, we obtain $i_{e_B}(Y)\cong\left(
                                                                                                                                                                                                   \begin{array}{c}
                                                                                                                                                                                                     0 \\
                                                                                                                                                                                                     Y \\
                                                                                                                                                                                                   \end{array}
                                                                                                                                                                                                 \right)$.
Since $M$ is right Gorenstein singular, we infer $\Gpd_Bi_{e_B}(Y)<\infty$ from Lemma \ref{lem:4.3} (2).  Let $Y^\bullet$ be a bounded complex of Gorenstein $B$-modules. Since $i_{e_B}$ is exact, we have $\D^b(i_{e_B})(Y^\bullet)\cong i_{e_B}(Y^\bullet)$, it is a bounded complex with each degree being of finite Gorenstein projective dimension. It follows from Lemma \ref{lem:2.4} that $\D^b(i_{e_B})(Y^\bullet)\in\Gperf(T)$. Now for any  $\left(
               \begin{array}{c}
                 F \\
                 G \\
               \end{array}
             \right)_\phi\in\Gproj T$, by Lemma \ref{lem:3.1} we have $S_{e_A}(\left(
               \begin{array}{c}
                 F \\
                 G \\
               \end{array}
             \right)_\phi)\cong F$.
Following Lemma \ref{lem:4.1}, we have the following exact sequence of $A$-modules
             $$0\to M\otimes_BG\to F\to \Coker\phi\to0$$  with $G\in\Gproj B$ and $\Coker\phi\in\Gproj A$.
Then $\Gpd_AM\otimes_BG<\infty$ since $M$ is right Gorenstein singular and hence $\Gpd_AF<\infty$. Similarly as above, we conclude that  $\D^b(S_{e_A})$ preserves Gorenstein perfect complexes.

Finally, it remains to show $\mathbb{R}\Hom_A(e_AT,-)$ preserves Gorenstein perfect complexes to complete our proof. To do this, let $X^\bullet$ be a bounded complex of Gorenstein projective $A$-modules. We proceed by induction on length $l(X^\bullet)$ of $X^\bullet$. If $l(X^\bullet)=1$, we may suppose $X^\bullet=X$ is the stalk complex concentrated in degree 0. Since $M\in{^\bot\Gproj A}$, it is clear to see  $$\mathbb{R}\Hom_A(e_AT,X)\cong\Hom_A(e_AT,X)\cong\left(
                                                             \begin{array}{c}
                                                               X \\
                                                               \Hom_A(M,X) \\
                                                             \end{array}
                                                           \right)_{\widetilde{id}}.$$
As $X\in\Gproj A$, one has $\left(
                              \begin{array}{c}
                               X \\
                                0 \\
                              \end{array}
                            \right)\in\Gproj T$.  Besides, we have $\Gpd_B\Hom_A(M,X)<\infty$ since $M$ is left Gorenstein singular. From Lemma \ref{lem:4.3} (2), we know $\Gpd_T\left(
                  \begin{array}{c}
                    0 \\
                    \Hom_A(M,X) \\
                  \end{array}
                \right)<\infty$.
Hence from the exactness of the sequence $(ex3)$, we get $\Gpd_T\left(
                                                             \begin{array}{c}
                                                               X \\
                                                               \Hom_A(M,X) \\
                                                             \end{array}
                                                           \right)_{\widetilde{id}}<\infty$ and then $\mathbb{R}\Hom_A(e_AT,X)\in\Gperf(T)$.

Now suppose $l(X^\bullet)=n\geq2$ and the assertion holds true for any integer less than $n$. We may assume $X^\bullet=0\to X^{0}\to X^{1}\to\cdots\to X^{n-1}\to0.$
It induces a triangle $$X^{0}[-1]\to X^\bullet_{\geq 1}\to X^\bullet\to X^{0}$$ in $\D^b(\mod A)$.
Apply the functor $\mathbb{R}\Hom_A(e_AT,-)$ to it, we get the following triangle
$$\mathbb{R}\Hom_A(e_AT,X^{0})[-1]\to \mathbb{R}\Hom_A(e_AT,X^\bullet_{\geq 1})\to \mathbb{R}\Hom_A(e_AT,X^\bullet)\to\mathbb{R}\Hom_A(e_AT,X^0)$$ in $\D^b(\mod T)$.
By the induction hypothesis, we have both $\mathbb{R}\Hom_A(e_AT,X^{0})$ and $\mathbb{R}\Hom_A(e_AT,X^\bullet_{\geq 1})$ are Gorenstein perfect. Note that $\Gperf(T)$ is a thick subcategory of $\D^b(\mod T)$. Hence we obtain $\mathbb{R}\Hom_A(e_AT,X^\bullet)\in\Gperf(T)$, that is, $\mathbb{R}\Hom_A(e_AT,-)$ preserves Gorenstein perfect complexes.
\end{proof}

Combine Proposition \ref{prop:4.7} with Proposition \ref{prop:3.5}, we get the following equivalent characterizations for the existence of a recollement of $\underline{\Gproj T}$ relative to $\underline{\Gproj B}$ and $\underline{\Gproj A}$.

\begin{cor}\label{cor:4.8}  Let $T=\left(
                                                                                 \begin{array}{cc}
                                                                                   A & M \\
                                                                                   0 & B \\
                                                                                 \end{array}
                                                                               \right)$ be a triangular matrix algebra with $_AM_B$ compatible. Assume that $\pd_A M<\infty$, $\pd_B\Hom_A(M,A)<\infty$ and $M\in{^\bot\Gproj A}$.  We have the following recollement of stable category of Gorenstein projective modules
$$\xymatrix{\underline{\Gproj B}\ar[r]& \underline{\Gproj T}\ar@/^1pc/[l]\ar@/_1pc/[l]\ar[r] & \underline{\Gproj A}\ar@/^1pc/[l]\ar@/_1pc/[l]}\eqno{(4.6)}$$ such that all these six functors are the restrictions of those in (3.8) if and only if $M$ is Gorenstein singular.
\end{cor}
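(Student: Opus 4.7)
The plan is to deduce Corollary \ref{cor:4.8} by combining Proposition \ref{prop:3.5} and Proposition \ref{prop:4.7} through the restriction principle of Lemma \ref{lem:2.7}, applied to the recollement (3.8) of singularity categories. The hypotheses of the corollary are arranged so that both propositions apply simultaneously, which is the whole point of the statement.

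First, I would verify that Proposition \ref{prop:3.5} applies. Its hypotheses are $\pd_A M<\infty$ and $M\in{}^{\bot}A$; the former is assumed directly, while the latter follows from the stronger assumption $M\in{}^{\bot}\Gproj A$ since $A\in\Gproj A$. Combined with $\pd_B\Hom_A(M,A)<\infty$, Proposition \ref{prop:3.5} produces the recollement (3.8) of singularity categories, whose six functors are induced from those of the derived-category recollement (3.3). Using Lemma \ref{lem:2.5}, I would then identify each $\underline{\Gproj R}$ (for $R\in\{A,B,T\}$) with the thick subcategory $\Gperf(R)/\K^b(\proj R)$ of $\D_{sg}(R)$, so that the desired recollement (4.6) becomes precisely the restriction of (3.8) to these thick subcategories.

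Applying Lemma \ref{lem:2.7} to the recollement (3.8) with these thick subcategories, the existence of the restricted recollement (4.6) is equivalent to the four functors $i_*=\D^b(i_{e_B})$, $i^*=\D^b(S_{e_B})$, $j^*=\D^b(S_{e_A})$, and $j_*=\mathbb{R}\Hom_A(e_AT,-)$ sending Gorenstein perfect complexes to Gorenstein perfect complexes. This is exactly the preservation condition characterized in Proposition \ref{prop:4.7}, whose hypotheses (compatibility of $_AM_B$, $\pd_A M<\infty$, and $M\in{}^{\bot}\Gproj A$) are in force: the four named functors preserve $\Gperf$ if and only if $M$ is Gorenstein singular. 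This closes both directions of the equivalence. The only subtlety I expect is verifying that Lemma \ref{lem:2.7}(3) constrains exactly the same four functors ($i^*,i_*,j^*,j_*$) whose behavior Proposition \ref{prop:4.7} analyses, so that no extra work on the outer adjoints $i^!=\mathbb{R}\Hom_T(B,-)$ and $j_!=\D^b(i_{e_A})$ is required and the matching with Proposition \ref{prop:4.7} is seamless.
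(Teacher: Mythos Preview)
Your proposal is correct and follows essentially the same route as the paper, which simply refers back to the template of Corollary \ref{cor:4.5}: use Proposition \ref{prop:3.5} (with $M\in{}^{\bot}A$ following from $M\in{}^{\bot}\Gproj A$) to obtain the ambient recollement (3.8), then invoke Lemma \ref{lem:2.7} together with Proposition \ref{prop:4.7} to see that the restriction (4.6) exists if and only if $M$ is Gorenstein singular. The only cosmetic difference is that the paper phrases the middle step via the equivalence $(1)\Leftrightarrow(2)$ of Lemma \ref{lem:2.7} (the restricted recollement (4.6) exists iff the quotient recollement (4.5) exists), whereas you phrase it via condition (3) (the four functors preserve the thick subcategories $\Gperf(-)/\K^b(\proj-)$); both unwind to the same preservation statement analysed in the proof of Proposition \ref{prop:4.7}.
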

\begin{proof} 
The proof is similar as that in Corollary \ref{cor:4.5}, we omit it.
\end{proof}

{\bf Proof of Theorem \ref{thm:1.2}.} This follows directly from Propositions \ref{prop:4.4} and \ref{prop:4.7}.\qed

\vspace{2mm}
Consequently, we have the following commutative diagram such that all the functors involving in the recollements and 2-recollements are mentioned above.

\begin{cor}\label{cor:4.9} Let $T=\left(
                                                                                 \begin{array}{cc}
                                                                                   A & M \\
                                                                                   0 & B \\
                                                                                 \end{array}
                                                                               \right)$ be a triangular matrix algebra with $_AM_B$ compatible.
\begin{enumerate}
\item[(1)] Assume that $\pd M_B<\infty$. Then we have the following commutative diagram of recollements
$$\xymatrix@R=4.5ex{0\ar[d]&0\ar[d]&0\ar[d]\\
 \underline{\Gproj A}\ar[d]\ar[r]& \underline{\Gproj T}\ar[d]\ar@/^1pc/[l]\ar@/_1pc/[l]\ar[r] & \underline{\Gproj B}\ar[d]\ar@/^1pc/[l]\ar@/_1pc/[l]\\
\D_{sg}(A)\ar[d]\ar[r]& \D_{sg}(T)\ar[d]\ar@/^1pc/[l]\ar@/_1pc/[l]\ar[r] & \D_{sg}(B)\ar[d]\ar@/^1pc/[l]\ar@/_1pc/[l]\\
\D_{def}(A)\ar[d]\ar[r]& \D_{def}(T)\ar[d]\ar@/^1pc/[l]\ar@/_1pc/[l]\ar[r] & \D_{def}(B)\ar[d]\ar@/^1pc/[l]\ar@/_1pc/[l]\\
0&0&0}$$
if and only if $\pd_AM<\infty$ and $M$ is right Gorenstein singular.
\item[(2)] Assume that $\pd_A M<\infty$ and $M\in{^\bot\Gproj A}$. Then we have the following commutative diagram of recollements
$$\xymatrix@R=4.5ex{0\ar[d]&0\ar[d]&0\ar[d]\\
 \underline{\Gproj B}\ar[d]\ar[r]& \underline{\Gproj T}\ar[d]\ar@/^1pc/[l]\ar@/_1pc/[l]\ar[r] & \underline{\Gproj A}\ar[d]\ar@/^1pc/[l]\ar@/_1pc/[l]\\
\D_{sg}(B)\ar[d]\ar[r]& \D_{sg}(T)\ar[d]\ar@/^1pc/[l]\ar@/_1pc/[l]\ar[r] & \D_{sg}(A)\ar[d]\ar@/^1pc/[l]\ar@/_1pc/[l]\\
\D_{def}(B)\ar[d]\ar[r]& \D_{def}(T)\ar[d]\ar@/^1pc/[l]\ar@/_1pc/[l]\ar[r] & \D_{def}(A)\ar[d]\ar@/^1pc/[l]\ar@/_1pc/[l]\\
0&0&0}$$ if and only if $\pd_B\Hom_A(M,A)<\infty$ and $M$ is Gorenstein singular.
\item[(3)] Assume that $\pd_A M<\infty$, $M\in{^\bot\Gproj A}$ and  $\pd M_B<\infty$. Then we have the following commutative diagram of 2-recollements
$$\xymatrix@R=4.5ex{0\ar[d]&0\ar[d]&0\ar[d]\\
 \underline{\Gproj A}\ar[d]\ar@/^0.35pc/[r]\ar@/_1.05pc/[r]& \underline{\Gproj T}\ar[d]\ar@/^0.35pc/[l]\ar@/_1.05pc/[l]\ar@/^0.35pc/[r]\ar@/_1.05pc/[r] & \underline{\Gproj B}\ar[d]\ar@/^0.35pc/[l]\ar@/_1.05pc/[l]\\
\D_{sg}(A)\ar[d]\ar@/^0.35pc/[r]\ar@/_1.05pc/[r]& \D_{sg}(T)\ar[d]\ar@/^0.35pc/[l]\ar@/_1.05pc/[l]\ar@/^0.35pc/[r]\ar@/_1.05pc/[r] & \D_{sg}(B)\ar[d]\ar@/^0.35pc/[l]\ar@/_1.05pc/[l]\\
\D_{def}(A)\ar[d]\ar@/^0.35pc/[r]\ar@/_1.05pc/[r]& \D_{def}(T)\ar[d]\ar@/^0.35pc/[l]\ar@/_1.05pc/[l]\ar@/^0.35pc/[r]\ar@/_1.05pc/[r] & \D_{def}(B)\ar[d]\ar@/^0.35pc/[l]\ar@/_1.05pc/[l]\\
0&0&0}$$
if and only if $\pd_B\Hom_A(M,A)<\infty$ and $M$ is Gorenstein singular.
\end{enumerate}
\end{cor}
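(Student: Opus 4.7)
My plan is to assemble each of the three commutative diagrams row by row, invoking the propositions and corollaries proved earlier that characterize each individual row, and then observe that the columns come for free from the exact sequences of Lemma \ref{lem:2.5}. The main point is that every horizontal functor in every row of each diagram descends from one and the same functor on the module-level 2-recollement (3.1), so once we verify that each row is a recollement, the squares automatically commute (up to natural isomorphism), and the columns are precisely the triangle-exact sequences of Lemma \ref{lem:2.5} applied to $A$, $T$, $B$.

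For part (1), under the standing hypothesis $\pd M_B<\infty$, the target diagram has three rows: the stable Gorenstein projective row (4.3), the singularity row (3.6), and the defect row (4.2). Proposition \ref{prop:3.3} reduces the existence of the middle row to the condition $\pd_A M<\infty$. Granted that condition, Proposition \ref{prop:4.4} makes the bottom row equivalent to $M$ being right Gorenstein singular, and Corollary \ref{cor:4.5} makes the top row equivalent to the same property. Hence in the forward direction, existence of the whole diagram forces each row to exist, yielding both $\pd_A M<\infty$ and the right Gorenstein singular property; conversely, these two conditions together produce all three rows simultaneously, and Lemma \ref{lem:2.5} supplies the three columns.

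Part (2) proceeds in parallel through the dual recollement (3.3). Here Proposition \ref{prop:3.5} converts the existence of the middle row (recollement (3.8)) into the condition $\pd_B\Hom_A(M,A)<\infty$, Proposition \ref{prop:4.7} converts the bottom row into $M$ being Gorenstein singular, and Corollary \ref{cor:4.8} converts the top row into the same property under the additional assumption $\pd_B\Hom_A(M,A)<\infty$. The same column-by-column argument based on Lemma \ref{lem:2.5} supplies commutativity. Part (3) is then a formal consequence: its hypotheses combine those of (1) and (2), each row of the desired 2-recollement diagram is literally the superposition of the two recollements from (1) and (2), and since Gorenstein singular subsumes right Gorenstein singular, the conjunction of the criteria collapses to the two stated conditions.

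The only point that truly needs attention is that the three vertical exact sequences of Lemma \ref{lem:2.5} glue into honest triangle-functor morphisms between the horizontal recollements, but this is automatic from the way every horizontal functor is induced from (3.1) via quotients by $\K^b(\proj-)$ or $\Gperf(-)$; no new computation is required, so I do not expect any genuine obstacle beyond bookkeeping.
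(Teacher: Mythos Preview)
Your proposal is correct and matches the paper's intended argument; the paper does not write out a proof for this corollary, treating it as an immediate consequence of Propositions \ref{prop:3.3}, \ref{prop:3.5}, \ref{prop:4.4}, \ref{prop:4.7} and Corollaries \ref{cor:4.5}, \ref{cor:4.8} together with Lemma \ref{lem:2.5}, exactly as you outline. Your observation that in part (1) one must first extract $\pd_A M<\infty$ from the middle row before Corollary \ref{cor:4.5} becomes applicable (and similarly that $\pd_B\Hom_A(M,A)<\infty$ must be extracted first in part (2) before Corollary \ref{cor:4.8}) is the one logical dependency worth making explicit, and you handle it correctly.
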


Recall that an $A$-$B$-bimodule $_AM_B$ is called a \emph{Frobenius bimodule} if it is projective as a left $A$- and right $B$-module, and there is an $A$-$B$-bimodule isomorphism
$${{_{B}\Hom_{B^{op}}}(M,B)_{A}}\simeq {_{B}\Hom_{A}}(M,{A})_{A}.$$
Meanwhile, an extension $A\subseteq B$ of algebras is called a \emph{Frobenius extension} if $B$ is projective
as an $A$-module and $B\cong\Hom_{A}(_{A}B,A)$ as a $B$-$A$-bimodule. In this case, both $_{A}B_{B}$ and $_{B}B_{A}$ are Frobenius bimodules. We refer to Kadison \cite{Kad} for more details on this matter.

\begin{prop} (1) Let $T=\left(
                                                                                 \begin{array}{cc}
                                                                                   A & M \\
                                                                                   0 & B \\
                                                                                 \end{array}
                                                                               \right)$ be a triangular matrix algebra with $_AM_B$ a Frobenius bimodule.
                                                                              Then we have the following commutative diagram of 2-recollements
$$\xymatrix@R=4.5ex{0\ar[d]&0\ar[d]&0\ar[d]\\
 \underline{\Gproj A}\ar[d]\ar@/^0.35pc/[r]\ar@/_1.05pc/[r]& \underline{\Gproj T}\ar[d]\ar@/^0.35pc/[l]\ar@/_1.05pc/[l]\ar@/^0.35pc/[r]\ar@/_1.05pc/[r] & \underline{\Gproj B}\ar[d]\ar@/^0.35pc/[l]\ar@/_1.05pc/[l]\\
\D_{sg}(A)\ar[d]\ar@/^0.35pc/[r]\ar@/_1.05pc/[r]& \D_{sg}(T)\ar[d]\ar@/^0.35pc/[l]\ar@/_1.05pc/[l]\ar@/^0.35pc/[r]\ar@/_1.05pc/[r] & \D_{sg}(B)\ar[d]\ar@/^0.35pc/[l]\ar@/_1.05pc/[l]\\
\D_{def}(A)\ar[d]\ar@/^0.35pc/[r]\ar@/_1.05pc/[r]& \D_{def}(T)\ar[d]\ar@/^0.35pc/[l]\ar@/_1.05pc/[l]\ar@/^0.35pc/[r]\ar@/_1.05pc/[r] & \D_{def}(B)\ar[d]\ar@/^0.35pc/[l]\ar@/_1.05pc/[l]\\
0&0&0.}$$

(2) Let $A\subseteq B$ be a Frobenius extension of algebras.
 Assume $T'=\left(
                                                                                 \begin{array}{cc}
                                                                                   A & B \\
                                                                                   0 & B \\
                                                                                 \end{array}
                                                                               \right)$, then we have the following commutative diagram of 2-recollements

$$\xymatrix@R=4.5ex{0\ar[d]&0\ar[d]&0\ar[d]\\
 \underline{\Gproj A}\ar[d]\ar@/^0.35pc/[r]\ar@/_1.05pc/[r]& \underline{\Gproj T'}\ar[d]\ar@/^0.35pc/[l]\ar@/_1.05pc/[l]\ar@/^0.35pc/[r]\ar@/_1.05pc/[r] & \underline{\Gproj B}\ar[d]\ar@/^0.35pc/[l]\ar@/_1.05pc/[l]\\
\D_{sg}(A)\ar[d]\ar@/^0.35pc/[r]\ar@/_1.05pc/[r]& \D_{sg}(T')\ar[d]\ar@/^0.35pc/[l]\ar@/_1.05pc/[l]\ar@/^0.35pc/[r]\ar@/_1.05pc/[r] & \D_{sg}(B)\ar[d]\ar@/^0.35pc/[l]\ar@/_1.05pc/[l]\\
\D_{def}(A)\ar[d]\ar@/^0.35pc/[r]\ar@/_1.05pc/[r]& \D_{def}(T')\ar[d]\ar@/^0.35pc/[l]\ar@/_1.05pc/[l]\ar@/^0.35pc/[r]\ar@/_1.05pc/[r] & \D_{def}(B)\ar[d]\ar@/^0.35pc/[l]\ar@/_1.05pc/[l]\\
0&0&0.}$$

(3)Let $A\subseteq B$ be a Frobenius extension of algebras.
 Assume $T''=\left(
                                                                                 \begin{array}{cc}
                                                                                   B & B \\
                                                                                   0 & A \\
                                                                                 \end{array}
                                                                               \right)$, then we have the following commutative diagram of 2-recollements
$$\xymatrix@R=4.5ex{0\ar[d]&0\ar[d]&0\ar[d]\\
 \underline{\Gproj B}\ar[d]\ar@/^0.35pc/[r]\ar@/_1.05pc/[r]& \underline{\Gproj T''}\ar[d]\ar@/^0.35pc/[l]\ar@/_1.05pc/[l]\ar@/^0.35pc/[r]\ar@/_1.05pc/[r] & \underline{\Gproj A}\ar[d]\ar@/^0.35pc/[l]\ar@/_1.05pc/[l]\\
\D_{sg}(B)\ar[d]\ar@/^0.35pc/[r]\ar@/_1.05pc/[r]& \D_{sg}(T'')\ar[d]\ar@/^0.35pc/[l]\ar@/_1.05pc/[l]\ar@/^0.35pc/[r]\ar@/_1.05pc/[r] & \D_{sg}(A)\ar[d]\ar@/^0.35pc/[l]\ar@/_1.05pc/[l]\\
\D_{def}(B)\ar[d]\ar@/^0.35pc/[r]\ar@/_1.05pc/[r]& \D_{def}(T'')\ar[d]\ar@/^0.35pc/[l]\ar@/_1.05pc/[l]\ar@/^0.35pc/[r]\ar@/_1.05pc/[r] & \D_{def}(A)\ar[d]\ar@/^0.35pc/[l]\ar@/_1.05pc/[l]\\
0&0&0.}$$

\end{prop}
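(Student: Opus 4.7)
The strategy is to verify, in each of the three parts, exactly the hypotheses of Corollary~\ref{cor:4.9}(3); the commutative diagrams of 2-recollements then follow from that corollary.

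For (1), suppose $_AM_B$ is a Frobenius bimodule. Projectivity of $_AM$ and of $M_B$ gives $\pd_A M = 0$ and $\pd M_B = 0$, while the identification $\Hom_A(M,A) \cong \Hom_{B^{op}}(M,B)$ together with projectivity of $M_B$ shows that $\Hom_A(M,A)$ is left-$B$-projective, so $\pd_B \Hom_A(M,A) = 0$. Since $M \in \proj A$ we have $M \in {^\bot\Gproj A}$ and $M \in (\Gproj A)^\bot$, and exactness of $M \otimes_B -$ (from $M_B \in \proj B$) establishes compatibility. The only substantive point left is that $M$ is Gorenstein singular.

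For the right Gorenstein singularity, given $G \in \Gproj B$ with a totally acyclic complex $P^\bullet \in \proj B$ and $G \cong Z^0(P^\bullet)$, I would apply $M \otimes_B -$ to $P^\bullet$. Exactness and preservation of projectives yield an acyclic complex in $\proj A$ whose $0$th cycle is $M \otimes_B G$. Total acyclicity is then checked through
\[
\Hom_A(M \otimes_B P^\bullet, R) \cong \Hom_B(P^\bullet, \Hom_A(M,R))
\]
for $R \in \proj A$, since $\Hom_A(M,R)$ is left-$B$-projective by the Frobenius identification, making the right-hand side acyclic by total acyclicity of $P^\bullet$. This forces $M \otimes_B G \in \Gproj A$, so $\Gpd_A(M \otimes_B G) = 0$. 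The left Gorenstein singularity is dual: $\Hom_A(M,-)$ is exact (as $_AM$ is projective) and sends $\proj A$ into $\proj B$ by the same identification, and I would check total acyclicity via the natural isomorphism $\Hom_B(\Hom_A(M,Q^\bullet), S) \cong \Hom_A(Q^\bullet, M \otimes_B S)$ for $Q^\bullet$ totally acyclic in $\proj A$ and $S \in \proj B$, noting that $M \otimes_B S \in \proj A$ since $_AM$ is projective. Hence $\Hom_A(M,F) \in \Gproj B$ for every $F \in \Gproj A$, completing Gorenstein singularity of $M$.

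Parts (2) and (3) reduce at once to (1). By the remark just before the statement, a Frobenius extension $A \subseteq B$ makes both $_AB_B$ and $_BB_A$ into Frobenius bimodules. Thus (2) is (1) applied to the Frobenius bimodule $_AB_B$ realising $T'$, while (3) is (1) applied to the Frobenius bimodule $_BB_A$ (with the roles of $A$ and $B$ interchanged in the notation of Corollary~\ref{cor:4.9}) realising $T''$. The principal obstacle in the argument is the verification of total acyclicity for the complexes $M \otimes_B P^\bullet$ and $\Hom_A(M, Q^\bullet)$; everything else is a formal consequence of two-sided projectivity of $M$ and the defining bimodule isomorphism.
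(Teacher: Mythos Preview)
Your proposal is correct and follows essentially the same route as the paper: verify the hypotheses of Corollary~\ref{cor:4.9}(3) for the Frobenius bimodule $_AM_B$, and then reduce (2) and (3) to (1) via the observation that a Frobenius extension produces Frobenius bimodules $_AB_B$ and $_BB_A$. The only difference is that the paper outsources the key step---that $M\otimes_B-$ and $\Hom_A(M,-)$ preserve Gorenstein projectives, hence $M$ is Gorenstein singular---to \cite[Theorem~3.4]{HLGZ}, whereas you supply a direct self-contained argument using the two adjunctions $(M\otimes_B-,\Hom_A(M,-))$ and $(\Hom_A(M,-),M\otimes_B-)$ afforded by the Frobenius condition; your argument is in effect an inline proof of the cited result.
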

\begin{proof} Since $A\subseteq B$ is a Frobenius extension, it follows that both $_{A}B_{B}$ and $_{B}B_{A}$ are Frobenius bimodules. We only prove (1), because (2) and (3) are its consequences.
As $M$ is Frobenius, it is easy to see $\pd_B\Hom_A(M,A)<\infty$. Following \cite[Theorem 3.4]{HLGZ}, we know that $M\otimes_B-$ and $\Hom_A(M,-)$ preserve Gorenstein projective modules. Hence $M$ is Gorenstein singular and then we get the desired 2-recollement diagram by Corollary \ref{cor:4.9}.
\end{proof}

\bigskip {\bf Acknowledgements}
\bigskip

This research was partially supported by NSFC (Grant No. 11626179, 11671069, 11701455, 11771212), Qing Lan
Project of Jiangsu Province, Jiangsu Government Scholarship for Overseas Studies (JS-2019-328), Shaanxi Province Basic Research Program of Natural Science (Grant No. 2017JQ1012, 2020JM-178) and Fundamental Research Funds for the Central Universities (Grant No. JB160703).

\end{document}